\documentclass[10pt, reqno]{amsart}
\usepackage{amsfonts}
\usepackage{ifthen}
\usepackage{amsthm}
\usepackage{amsmath}
\usepackage{graphicx}
\usepackage{caption}
\usepackage{subcaption}
\usepackage{amscd,amssymb,amsthm}
\usepackage{color}
\usepackage{hyperref}
\usepackage{array,multirow,makecell}
\usepackage{cite,enumerate}
\setcellgapes{1pt}
\makegapedcells
\newcolumntype{R}[1]{>{\raggedleft\arraybackslash }b{#1}}
\newcolumntype{L}[1]{>{\raggedright\arraybackslash }b{#1}}
\newcolumntype{C}[1]{>{\centering\arraybackslash }b{#1}}
\setlength{\paperwidth}{210mm} \setlength{\paperheight}{297mm}
\setlength{\oddsidemargin}{0mm} \setlength{\evensidemargin}{0mm}
\setlength{\topmargin}{-20mm} \setlength{\headheight}{10mm}
\setlength{\headsep}{3mm} \setlength{\textwidth}{160mm}
\setlength{\textheight}{240mm} \setlength{\footskip}{15mm}
\setlength{\marginparwidth}{0mm} \setlength{\marginparsep}{0mm}

\newcounter{minutes}\setcounter{minutes}{\time}
\divide\time by 60
\newcounter{hours}\setcounter{hours}{\time}
\multiply\time by 60 \addtocounter{minutes}{-\time}

\newtheorem{theorem}{Theorem}[section]
\newtheorem{lemma}[theorem]{Lemma}
\newtheorem{definition}[theorem]{Definition}
\newtheorem{corollary}[theorem]{Corollary}

\newtheorem{example}[theorem]{Example}

\title[Geometric Properties of the Four Parameters Wright Function]{Geometric Properties of the Four Parameters Wright Function}

\author[S. Das \;and\; K. Mehrez]{Sourav Das\;and\;Khaled Mehrez}
\address{Sourav Das\newline Department of Mathematics, National Institute of Technology Jamshedpur,\newline Jamshedpur-831014, Jharkhand, India.}
\email{souravdasmath@gmail.com, souravdas.math@nitjsr.ac.in}
\address{Khaled Mehrez\newline D\'epartement de Math\'ematiques, Facult\'e de Sciences de Tunis,\newline Universit\'e Tunis El Manar, Tunisia.}
\email{k.mehrez@yahoo.fr}

\keywords{Wright function, analytic function, univalent function, starlike function, close-to-convex function}

\subjclass[2010]{30C45, 30D15, 33C10}

\begin{document}


\maketitle

\begin{abstract}
In this paper, four parameters Wright function is considered. Certain geometric properties such as starlikeness, convexity, uniform convexity and close-to-convexity are discussed for this function.
Further, certain geometric properties of normalized Bessel function of the first kind and two parameters Wright function are studied as a consequence. Interesting corollaries and examples are provided to support that these results are better than the existing ones and improve several results available in the literature.
\end{abstract}

\section{Introduction}
The Wright function
\begin{equation}\label{1.2}
W_{\alpha,\beta}(z)=\sum_{k=0}^\infty \frac{z^k}{k!\Gamma(\alpha k+\beta)},\;\beta,z\in\mathbb{C}, \alpha>-1
\end{equation}
was introduced by E. M. Wright \cite{57} in connection with the asymptotic theory of partitions.
The Wright function  plays vital role in
fractional calculus \cite{Podlubny-Fractional-Book-99,Luchko-Gorenflo-1998}, the Mikusi\'ski operational calculus, integral transforms of the Hankel type and stochastic processes. For a historical overview regarding the Wright function and its applications we refer to
\cite[ Appendix F]{35}.
Note that $W_{\alpha,\beta}(z)$ is an entire function of order $1/(1+\alpha)$ and also known as the generalized Bessel function \cite{Podlubny-Fractional-Book-99,Baricz-Book-Bessel-Function1994}. These functions
generalize hypergeometric functions \cite{abramowitz64,Andrews-Askey-Roy-Book-specialFunctions-1999}.

The four parameters Wright function \cite{Luchko-Gorenflo-1998,Mehrez-Fox-Wright-JMAA-2019}
\begin{align}\label{eq:WrightFn-four-parameter}
\mathcal{W}_{(\mu,a),(\nu,b)}(z)=\sum_{k=0}^{\infty}\dfrac{z^k}{\Gamma(a+k\mu)\Gamma(b+k\nu)},\quad a,b\in \mathbb{C}, \quad  \mu,\nu\in\mathbb{R},
\end{align}
was studied by E. M. Wright for the case $\mu,\nu>0$ in \cite{Wright-JLMS1935}. Further, he derived several properties of  $\mathcal{W}_{(\mu,a),(\nu,b)}(z)$ for the case $b=\nu=1$ and $-1<\mu<0$ in \cite{Wright-QJM1940}.
It can be verified \cite{Mehrez-Fox-Wright-JMAA-2019} that if $\mu+\nu>0$, then the infinite series expansion \eqref{eq:WrightFn-four-parameter} of $\mathcal{W}_{(\mu,a),(\nu,b)}(z)$ converges absolutely for all $z\in\mathbb{C}$.
It is well-known (see \cite{Luchko-Gorenflo-1998}) that $\mathcal{W}_{(\mu,a),(\nu,b)}(z)$
is an entire function for $a,b\in\mathbb{C}$ and $0<-\mu<\nu.$

Let us consider that $\mathcal{H}$ denotes the class of all analytic functions inside the unit disk $\mathbb{D}=\{z:|z|<1\}$.
Suppose that $\mathcal{A}$ is the class of all functions $f\in \mathcal{H}$ which are normalized by $f(0)=f'(0)-1=0$ such that
$$f(z)=z+\sum_{k=2}^{\infty}a_kz^k,\quad z\in\mathbb{D}.$$

A function $f\in\mathcal{A}$ is said to be a starlike function (with respect to the origin $0$) in $\mathbb{D}$, if  $f$ is univalent in $\mathbb{D}$ and
$f(\mathbb{D})$ is a star-like domain with respect to $0$ in $\mathbb{C}$. This class of starlike functions is denoted by $\mathcal{S}^*.$ The analytic characterization of $\mathcal{S}^*$ is given \cite{Duren-book-Univalent-Functions-1983} below:
$$
\Re\left(\frac{zf^\prime(z)}{f(z)}\right)>0\;\;\forall z\in\mathbb{D}\;\;\;\Longleftrightarrow\;\;\;\;f\in\mathcal{S}^*.$$
Let $\eta\in[0,1)$ and $z\in \mathbb{D}$. If
$
\Re\left(\dfrac{zf'(z)}{f(z)} \right)>\eta,
$
then the function $f\in\mathcal{A}$ is said to be a starlike function of order $\eta$.
We denote the class of starlike functions of order $\eta$ by $\mathcal{S}^*(\eta)$.

A function $f\in\mathcal{A}$ is said to be  convex  in $\mathbb{D}$ if $f$ is univalent in $\mathbb{D}$ and $f(\mathbb{D})$ is a convex domain in $\mathbb{C}$. We denote this class of convex functions by $\mathcal{K}$. This class can be analytically characterized as follows:
$$\Re\left(1+\frac{zf^{\prime\prime}(z)}{f^\prime(z)}\right)>0,\;\forall z\in\mathbb{D} \;\;\;\Longleftrightarrow\;\;\;\;f\in \mathcal{K}.$$
A function $f(z)\in\mathcal{A}$ is said to be a convex function of order $\eta$ $(0\leq \eta<1),$  if
$$
\Re\left(1+\dfrac{zf''(z)}{f'(z)} \right)>\eta,\quad z\in \mathbb{D}.
$$
This class is denoted by $\mathcal{K}(\eta)$. In particular, $\mathcal{K}=\mathcal{K}(0)$ and $\mathcal{S}^*=\mathcal{S}^*(0).$
It is well-known that $zf'$ is starlike  if and only if   $f\in\mathcal{A}$ is convex.
A function $f(z)\in\mathcal{A}$ is said to be close-to-convex in $\mathbb{D}$ if $\exists$ a starlike function $g(z)$ in $\mathbb{D}$ such that
$$
\Re\left(\dfrac{zf'(z)}{g(z)}\right)>0,\quad z\in \mathbb{D}.
$$
The class of all close-to-convex functions is denoted by $\mathcal{C}$. It can be easily verified that
$\mathcal{K}\subset \mathcal{S}^{*}\subset\mathcal{C}$. It is well-known that every close-to-convex function in $\mathbb{D}$ is
also univalent in $\mathbb{D}$.

A function $f\in\mathcal{A}$ is said to be uniformly convex (starlike) if for every circular arc $\gamma$ contained in $\mathbb{D}$ with center
$\zeta\in\mathbb{D}$ the image arc $f(\gamma)$  is convex (starlike w.r.t. the image $f(\zeta)$).
The class of all uniformly convex (starlike) functions is denoted by $UCV$ ($UST$) \cite{Ronning-PAMS-Uniformly-Convex-93}.
In \cite{Goodman-UCV-APM1991,Goodman-UST-JMAA1991}, A. W. Goodman introduced these classes. Later, F. R\o nning
\cite{Ronning-PAMS-Uniformly-Convex-93} introduced a new
class of starlike functions $\mathcal{S}_p$  defined by
$$
\mathcal{S}_p:=\{f:f(z)=zF'(z),\,F\in UCV   \}.
$$
For further details on geometric properties of analytic functions we refer to
\cite{Duren-book-Univalent-Functions-1983,Goodman-book-I-1983,Goodman-book-II-1983,Baricz-Book-Bessel-Function1994,MacGregor-PAMS1963,MacGregor-PAMS1964,Raina-RSMMUP1996, raza1,Mocanu-starlike-cond-LibMath1993}
and references cited therein.

Problems for investing geometric properties including starlikeness, closed-to-convexity, convexity or univalency of family of analytic functions in
the $\mathbb{D}$ involving special functions have always been attracted by several researchers
\cite{Prajapat-Wright-Function-ITSF2015,Bansal-Prajapat-Mittagleffler-2016,Duren-book-Univalent-Functions-1983,Goodman-book-I-1983,Goodman-book-II-1983,
Baricz-Book-Bessel-Function1994,MacGregor-PAMS1963,MacGregor-PAMS1964,Ozaki-1935,Wilf-Subord-convex-map-PAMS1961,AS-CMA-2010}.

We observe that  $\mathcal{W}_{(\mu,a),(\nu,b)}(z) \notin \mathcal{A}$. For this reason,
we consider the following normalization of $\mathcal{W}_{(\mu,a),(\nu,b)}(z)$ as follows
\begin{equation}\label{eq:normalized-Wright-four-parameter}
\begin{split}
\mathbb{W}_{(\mu,a),(\nu,b)}(z)&=z\Gamma(a)\Gamma(b)\mathcal{W}_{(\mu,a),(\nu,b)}(z)\\
&=\sum_{k=0}^{\infty}\dfrac{\Gamma(a)\Gamma(b)z^{k+1}}{\Gamma(a+k\mu)\Gamma(b+k\nu)},\;  a,b\in \mathbb{C},\, \mu,\nu\in\mathbb{R}\\
 &=\sum_{k=0}^{\infty}\alpha_k z^{k+1},
\end{split}
\end{equation}
where
$$\alpha_k=\frac{\Gamma(a)\Gamma(b)}{\Gamma(a+k\mu)\Gamma(b+k\nu)}.$$
Although in \eqref{eq:normalized-Wright-four-parameter},  $a,b,z\in\mathbb{C}$, however in this work $a$ and $b$ are restricted to real valued
and $z\in\mathbb{D}$.

For two functions $f$ and $g$, which are analytic in $\mathbb{D}$, we say that the function the function $f(z)$ is subordinate to $g(z)$ in $\mathbb{D}$, and write $f(z)\prec g(z)$ or $f\prec g$ $(z\in\mathbb{D})$, if there exists a function $w(z)$, which is analytic in $\mathbb{D}$ with $w(0)=0$ and $|w(z)|<1$ for all $z\in \mathbb{D}$, such that $f(z)=g(w(z))$, $z\in\mathbb{D}$. It is well-known that
if $f(z)\prec g(z)$ $(z\in\mathbb{D})$, then $f(0)=g(0)$ and $f(\mathbb{D}) \subset g(\mathbb{D})$.
Furthermore, if the function $g(z)$ is univalent in $\mathbb{D}$, then $f(z)\prec g(z)$ if and only if $f(0)=g(0)$ and
$f(\mathbb{D})\subset g(\mathbb{D})$.
Following definition will be very helpful to prove some of the main results.
\begin{definition}
Let $\{\beta_n\}_{n\geq1}$ be a sequence of complex numbers. Then $\{\beta_n\}_{n\geq1}$ is called a subordinating factor sequence, if
\begin{align}\label{eq:series-subord}
f(z)=\sum_{n=1}^{\infty}\alpha_n z^n \in \mathcal{K}
\end{align}
implies
\begin{align}\label{eq:series-subord-cond}
\sum_{n=1}^{\infty}\alpha_n\beta_n z^n \prec f(z).
\end{align}
This class is denoted by $\mathcal{F}.$
A finite sequence $\{\beta_n\}_{n=1}^{k}$  is called a subordinating factor sequence if \eqref{eq:series-subord} yields  \eqref{eq:series-subord-cond}
whenever $\alpha_{k+1}=\alpha_{k+2}=\cdots =0$. This class of such finite sequences of length $k$ is denoted by $\mathcal{F}_{k}$.
\end{definition}

This paper is organized as follows. We provide some lemmas in Section 2, which will be helpful to prove the main results.
In Section 3, starlikeness, convexity and uniform convexity of $\mathcal{W}_{(\mu,a),(\nu,b)}(z)$, are discussed using the properties of Fox-Wright function. In Section 4, we provide some alternative conditions for starlikeness, convexity and uniform convexity of $\mathbb{W}_{(\mu,a),(\nu,b)}(z),$ which will be helpful to discuss close-to-convexity (univalency) of $\mathbb{W}_{(\mu,a),(\nu,b)}(z).$
In Section 5, we derive some properties and inequalities related to $\mathcal{W}_{(\mu,a),(\nu,b)}(z)$ and $\mathbb{W}_{(\mu,a),(\nu,b)}(z)$ involving hypergeometric function. 
 In Section 6, we discuss geometric properties of normalized Bessel function of the first kind and two parameters Wright function,
as applications and show that the results obtained in this paper, are better than the existing ones available in the literature.

\section{Some Lemmas}
In this section, we provide some useful lemmas which will be useful to complete the
proof of the main results.

\begin{lemma}[\cite{Fejer1936}]\label{lm:convex-decreasing-R}
Let $\{a_k\}_{k=1}^{\infty}$ be a sequence of non-negative real numbers such that $a_1=1$. If $\{a_k\}_{k=2}^{\infty}$ is convex decreasing, i.e.,
$0\geq a_{k+2}-a_{k+1}\geq a_{k+1}-a_{k}$, then
$$
\Re\left(\sum_{k=1}^{\infty}a_kz^{k-1}\right)>\dfrac{1}{2}, \quad z\in \mathbb{D}.
$$
\end{lemma}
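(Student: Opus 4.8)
The plan is to represent the analytic function $F(z):=\sum_{k=1}^{\infty}a_kz^{k-1}$ as a convex combination, with nonnegative weights of total mass $a_1=1$, of functions each of whose real part exceeds $\tfrac12$ on $\mathbb{D}$; the assertion then follows termwise. The two building blocks are the geometric sum $\tfrac{1}{1-z}$ and the normalized Fej\'er kernels $\tfrac1k\sigma_k$, where $\sigma_k(z):=\sum_{j=1}^{k}(k-j+1)z^{j-1}$. For the first block a direct computation gives $\Re\tfrac{1}{1-z}-\tfrac12=\tfrac{1-|z|^2}{2|1-z|^2}>0$ on $\mathbb{D}$, with value $1$ at $z=0$. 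For the second, note $\tfrac1k\sigma_k(0)=1$, and on the circle $z=e^{i\theta}$ the nonnegativity of the Fej\'er kernel yields $\Re\sigma_k(e^{i\theta})=\tfrac{k}{2}+\tfrac12\big(\sin(k\theta/2)/\sin(\theta/2)\big)^2\ge \tfrac{k}{2}$; since $\Re\sigma_k$ is harmonic and non-constant, the minimum principle upgrades this to $\tfrac1k\Re\sigma_k(z)>\tfrac12$ throughout $\mathbb{D}$.

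Next I would produce the representation by applying the Abel transform twice to the partial sums $s_n(z)=\sum_{k=1}^{n}a_kz^{k-1}$. Writing $P_m(z)=\sum_{j=1}^{m}z^{j-1}$ and $\sigma_k=\sum_{j=1}^{k}P_j$, the first transform gives $s_n=a_nP_n+\sum_{k=1}^{n-1}(a_k-a_{k+1})P_k$ and the second gives
$$s_n=a_nP_n+(a_{n-1}-a_n)\sigma_{n-1}+\sum_{k=1}^{n-2}(a_k-2a_{k+1}+a_{k+2})\sigma_k.$$
This is where the hypotheses enter: $a_k-2a_{k+1}+a_{k+2}\ge0$ (convexity) makes all coefficients of the $\sigma_k$ nonnegative, while monotonicity and nonnegativity guarantee that $\{a_k\}$ converges to some $L\ge0$ and that $a_k-a_{k+1}$ is nonnegative, non-increasing and summable, whence $n(a_n-a_{n+1})\to0$. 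Letting $n\to\infty$ therefore kills the middle term and sends $a_nP_n$ to $\tfrac{L}{1-z}$, producing
$$F(z)=\frac{L}{1-z}+\sum_{k=1}^{\infty}k\big(a_k-2a_{k+1}+a_{k+2}\big)\,\frac{\sigma_k(z)}{k}.$$

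The crux is the total mass of this combination. A short telescoping (Abel) computation gives $\sum_{k=1}^{\infty}k(a_k-2a_{k+1}+a_{k+2})=a_1-L$, so the weights satisfy $L+\sum_{k\ge1}k(a_k-2a_{k+1}+a_{k+2})=a_1=1$. Hence $F$ is a convex combination of $\tfrac{1}{1-z}$ and the $\tfrac1k\sigma_k$, each with real part $>\tfrac12$, and at least one weight is positive; therefore $\Re F(z)>\tfrac12$ on $\mathbb{D}$.

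I expect the main obstacle to be pinning down the constant $\tfrac12$ rather than a weaker bound. The naive estimate, using only $\Re\tfrac{1}{1-z}>\tfrac12$ and $\Re\sigma_k>0$, gives merely $\Re F>L/2$, which is vacuous when $L=0$ (as for $a_k=1/k$). The remedy is precisely to renormalize each kernel so that it equals $1$ at the origin and has real part $>\tfrac12$, and then to verify via the telescoping identity that the weights sum exactly to $a_1=1$; this is what converts the normalization $a_1=1$ into the target constant. I would also be careful to use convexity for all $k\ge1$ (in particular $a_1-2a_2+a_3\ge0$): without controlling $a_1$ against $a_2$ the weight of $\sigma_1$ could be negative and the conclusion can genuinely fail.
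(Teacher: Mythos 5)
Your proposal is correct. Note that the paper itself offers no proof of this lemma: it is quoted as a known result with a citation to F\'ejer's 1936 paper, so there is nothing internal to compare against; your argument is essentially a faithful reconstruction of the classical F\'ejer proof (double summation by parts against the kernels $\sigma_k$, with the tail terms $a_nP_n\to L/(1-z)$ and $(a_{n-1}-a_n)\sigma_{n-1}\to 0$ handled via $n(a_n-a_{n+1})\to 0$, and the telescoping identity $L+\sum_{k\geq 1}k(a_k-2a_{k+1}+a_{k+2})=a_1=1$ pinning down the constant $\tfrac12$). One cosmetic point: for $k=1$ the kernel $\sigma_1\equiv 1$ is constant, so the minimum-principle step does not apply there, but the required bound $\Re\sigma_1>\tfrac12$ holds trivially; and your closing remark about needing convexity at $k=1$ (so that the weight of $\sigma_1$ is nonnegative) is a genuine and worthwhile observation, since the lemma as stated in the paper is ambiguous on whether the displayed inequality is assumed for $k=1$, and the conclusion can indeed fail without it.
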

%

\begin{lemma}[\cite{Wilf-Subord-convex-map-PAMS1961}]\label{lm:equivalent-cond}
Let $\{\gamma_k\}$ be a sequence of complex numbers and $z\in\mathbb{D}$. Then the following statements are equivalent:
\begin{enumerate}[(i)]
\item $\{\gamma_k\}_{k=1}^{\infty}\in \mathcal{F}.$
\item $\Re\left(1+2\sum_{k=1}^{\infty}\gamma_kz^k \right)>0$.
\end{enumerate}
\end{lemma}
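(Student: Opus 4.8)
The plan is to prove the two implications separately, using the observation that forming $\sum_{n}\alpha_n\gamma_n z^n$ out of $f(z)=\sum_n\alpha_n z^n$ is precisely the Hadamard (convolution) product of $f$ with $g(z):=\sum_{k=1}^{\infty}\gamma_k z^k$. Thus statement (i) asserts exactly that $g\ast f$ is subordinate to $f$ (in the paper's notation $g\ast f\subseteq f$) for every $f\in\mathcal{K}$, and I would reduce everything to understanding the single function $g$.

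For the implication (ii)$\Rightarrow$(i) I would start from the Herglotz representation. Condition (ii) says that $p(z):=1+2\sum_{k\ge1}\gamma_k z^k$ satisfies $p(0)=1$ and $\Re p>0$ on $\mathbb{D}$, so there is a probability measure $\mu$ on the unit circle with
\[
p(z)=\int_{|x|=1}\frac{1+xz}{1-xz}\,d\mu(x).
\]
Comparing Taylor coefficients via $\frac{1+xz}{1-xz}=1+2\sum_{n\ge1}x^nz^n$ identifies $\gamma_n=\int_{|x|=1}x^n\,d\mu(x)$ as the moments of $\mu$. The key computation is then the convolution identity
\[
(g\ast f)(z)=\sum_{n\ge1}\alpha_n\Big(\int_{|x|=1}x^n\,d\mu(x)\Big)z^n=\int_{|x|=1}f(xz)\,d\mu(x),
\]
the interchange of sum and integral being justified by uniform convergence on compact subsets of $\mathbb{D}$. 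To conclude, fix $z\in\mathbb{D}$ and put $r=|z|$: for every $|x|=1$ the value $f(xz)$ lies in $f(\overline{D_r})$, which—since $f$ is convex and univalent, and hence maps each closed subdisk $\overline{D_r}\subset\mathbb{D}$ onto a compact convex set contained in $f(\mathbb{D})$—is a compact convex subset of $f(\mathbb{D})$. As $\mu$ is a probability measure, the average $\int_{|x|=1}f(xz)\,d\mu(x)$ lands in that compact convex set, hence in $f(\mathbb{D})$; together with $(g\ast f)(0)=f(0)=0$ and the univalence of $f$ this is precisely the subordination $g\ast f\prec f$, i.e. $\{\gamma_k\}\in\mathcal{F}$.

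For the converse (i)$\Rightarrow$(ii) I would simply test the hypothesis on the one convex function $f(z)=z/(1-z)=\sum_{n\ge1}z^n$ (all coefficients equal to $1$), whose image is the half-plane $\Re w>-\tfrac12$. Because $\{\gamma_k\}$ is a subordinating factor sequence, $\sum_{n\ge1}\gamma_n z^n\prec z/(1-z)$, and subordination to this univalent map forces $\Re\sum_{n\ge1}\gamma_n z^n>-\tfrac12$ on $\mathbb{D}$, which is exactly (ii).

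The main obstacle is the averaging step in (ii)$\Rightarrow$(i): one must guarantee that the integral of the circle values $f(xz)$ stays inside the \emph{open} domain $f(\mathbb{D})$ rather than merely its closure, since subordination demands that the image avoid $\partial f(\mathbb{D})$. This is exactly where the convexity of the image $f(\overline{D_r})$ of the closed subdisk (for $r<1$) and its strict containment $f(\overline{D_r})\subset f(\mathbb{D})$ are essential, and where the univalence of $f$ is used to upgrade set containment of images into a genuine subordination.
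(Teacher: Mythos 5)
The paper does not actually prove this lemma: it is quoted from Wilf \cite{Wilf-Subord-convex-map-PAMS1961}, so there is no in-paper argument to compare against. Your proposal is correct, and it is in substance Wilf's original proof: for (ii)$\Rightarrow$(i) the Herglotz representation of $p(z)=1+2\sum_{k\geq1}\gamma_kz^k$ identifies $\gamma_n=\int_{|x|=1}x^n\,d\mu(x)$ as moments of a probability measure, the convolution identity $\sum_n\alpha_n\gamma_nz^n=\int_{|x|=1}f(xz)\,d\mu(x)$ follows by uniform convergence, and the average stays inside the convex image of $f$; for (i)$\Rightarrow$(ii) one tests the sequence on the single convex map $f(z)=z/(1-z)$, whose image is the half-plane $\Re w>-\tfrac12$, and strictness of (ii) comes for free since subordination places all values in the open half-plane.

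Two small refinements are worth recording. First, you invoke Study's theorem (convexity of $f$ on the closed subdisk $|z|\leq r$) where a weaker and more elementary fact suffices: the values $f(xz)$, $|x|=1$, lie in the compact set $f(\{w:|w|\leq r\})\subset f(\mathbb{D})$, and since $f(\mathbb{D})$ is convex (this is exactly the definition of $f\in\mathcal{K}$), the closed convex hull of that compact set is again a compact subset of $f(\mathbb{D})$; this already keeps the integral strictly inside the open image, which was the obstacle you flagged. Second, the step from ``$(g\ast f)(0)=f(0)$ together with $(g\ast f)(\mathbb{D})\subseteq f(\mathbb{D})$'' to genuine subordination deserves its one-line justification: univalence of $f$ lets you set $\omega=f^{-1}\circ(g\ast f)$, which is analytic on $\mathbb{D}$ with $\omega(0)=0$ and $\omega(\mathbb{D})\subseteq\mathbb{D}$, so $g\ast f\prec f$. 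With these two remarks made explicit, the proof is complete and matches the cited source's method.
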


\begin{lemma}[\cite{Mocanu-starlike-cond-RRMPA-1988}]\label{lm:starlike-derivative-mocannu}
Let $f(z)\in\mathcal{A}$ and $|f'(z)-1|<2/\sqrt{5}\;\; \forall\,z\in \mathbb{D}.$ Then $f(z)$ is a starlike function in $\mathbb{D}$.
\end{lemma}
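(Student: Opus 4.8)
The plan is to reduce the statement to the analytic criterion $\Re\left(zf'(z)/f(z)\right)>0$ and then to control that quotient through the hypothesis on $f'$. First I would record two consequences of $|f'(z)-1|<2/\sqrt5$. Since $2/\sqrt5<1$, we have $\Re f'(z)>1-2/\sqrt5>0$ on $\mathbb{D}$, so by the Noshiro--Warschawski theorem $f$ is univalent in the convex domain $\mathbb{D}$; in particular $f(z)\neq0$ for $0<|z|<1$, and because $f(z)=z(1+\cdots)$ the quotient $P(z):=zf'(z)/f(z)$ is analytic in $\mathbb{D}$ with $P(0)=1$. Thus it suffices to prove $\Re P(z)>0$ throughout $\mathbb{D}$.

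Next I would exploit the representation $f(z)/z=\int_0^1 f'(zt)\,dt$, which follows from $f(0)=0$. Writing $h:=f'-1$, so that $h(0)=0$ and $|h(z)|<2/\sqrt5$, and setting $\phi(z):=\int_0^1 h(zt)\,dt=(f(z)-z)/z$, a direct computation gives
\begin{equation*}
P(z)=\frac{zf'(z)}{f(z)}=\frac{1+h(z)}{1+\phi(z)},\qquad \Re P(z)>0 \iff \Re\left[(1+h(z))\,\overline{(1+\phi(z))}\right]>0 .
\end{equation*}
So the task becomes showing that the value $f'(z)=1+h(z)$ and the radial average $1+\phi(z)$ of $f'$ never point into opposite half-planes, for every $z\in\mathbb{D}$.

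The crux is the sharp constant. Estimating $|h|$ and $|\phi|$ by the triangle inequality (or by the Schwarz lemma, $|h(z)|\le(2/\sqrt5)|z|$ and $|\phi(z)|\le(2/\sqrt5)|z|/2$) only yields the weaker threshold $1/2$: these bounds treat the two quantities as independent, whereas $\phi$ is the average of the \emph{same} analytic function $h$, so the ``worst case'' that defeats the crude bound is excluded by analyticity. I would therefore pass to the auxiliary function $w=(1-P)/(1+P)$, which satisfies $w(0)=0$ and is analytic where $P\neq-1$, and apply the Clunie--Jack lemma: if $f$ failed to be starlike there would be a first point $z_0$ with $|w(z_0)|=1$ and $z_0 w'(z_0)=k\,w(z_0)$, $k\ge1$, forcing $\Re P(z_0)=0$. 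Translating this back through $P=(1+h)/(1+\phi)$ together with the identity $\phi'(z)=(h(z)-\phi(z))/z$ and the Schwarz--Pick bound on $h'(z_0)$ produces, after eliminating $k$, a real inequality in $h(z_0)$ and $\phi(z_0)$ whose optimization is governed by a quadratic.

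I expect this last extremal bookkeeping to be the main obstacle: one must show that the analytic coupling between $f'$ and its average rules out the naive worst configuration and pins the threshold at exactly $2/\sqrt5$ (equivalently $\sin(\arctan 2)$, the value at which the discriminant of that quadratic changes sign). This is precisely the delicate content of Mocanu's argument that the cited statement records, so in the paper I would simply invoke it; for a self-contained proof the differential-subordination (admissible-function) method offers the cleanest route to carrying out the extremal step.
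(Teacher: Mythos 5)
The paper gives no proof of this lemma at all: it is imported verbatim from Mocanu \cite{Mocanu-starlike-cond-RRMPA-1988}, so your closing recommendation (``in the paper I would simply invoke it'') coincides exactly with what the authors do. Judged as a proof attempt, however, your sketch stops precisely where the content begins. The preliminary reductions are all sound --- $\Re f'>1-2/\sqrt5>0$ and Noshiro--Warschawski give univalence, $f(z)/z=\int_0^1 f'(tz)\,dt$ gives $P=(1+h)/(1+\phi)$, and your observation that crude modulus bounds only reach the threshold $1/2$ is correct --- but the Clunie--Jack step is a plan, not an argument. After the first-contact normalization $P(z_0)=i\rho$, $z_0w'(z_0)=kw(z_0)$, $k\ge1$, your data are $h(z_0)$, $\phi(z_0)$ and $h'(z_0)$, constrained only by $|h(z_0)|\le (2/\sqrt5)|z_0|$, a Schwarz--Pick bound, and the single identity $\phi'(z)=(h(z)-\phi(z))/z$; you never exhibit the ``quadratic'' whose discriminant is supposed to change sign at $2/\sqrt5$, and it is not evident that this two-function bookkeeping closes at all --- the configuration space looks underdetermined, whereas Mocanu's differential-subordination method works with a single function and a single admissibility inequality. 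Since the sharp constant $2/\sqrt5$ is the entire point of the lemma, the proposal asserts rather than derives it; as a blind proof it has a genuine gap.

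It is worth noting that your own integral representation already yields the cheapest honest route available inside this paper. The values $f'(tz)$, $t\in[0,1]$, lie in the open convex disk $\{w:|w-1|<2/\sqrt5\}$, and the average of a continuous function with values in an open convex set lies in that set (separate with a supporting line at any putative boundary average); hence $|f(z)/z-1|<2/\sqrt5$ for all $z\in\mathbb{D}$, and Lemma \ref{lm:starlike-simple-cond} --- Mocanu's 1993 criterion, also quoted without proof in Section 2 --- immediately gives starlikeness. Thus Lemma \ref{lm:starlike-derivative-mocannu} is a one-line corollary of Lemma \ref{lm:starlike-simple-cond}. This does not make anything self-contained, since the 1993 lemma is itself cited, but it is strictly simpler than the extremal program you outline and uses only the tools you had already written down.
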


\begin{lemma}[\cite{MacGregor-PAMS1963}]\label{lm:univalent-D1/2}
Suppose that $f(z)\in\mathcal{A}$ and $|(f(z)/z)-1|<1 \;\; \forall\,z\in \mathbb{D}$. Then $f(z)$ is a univalent and starlike in $\mathbb{D}_{1/2}=\{z:|z|<1/2\}$.
\end{lemma}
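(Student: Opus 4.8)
The plan is to reduce the hypothesis to a statement about a Schwarz-type function and then control the logarithmic derivative $zf'(z)/f(z)$ on the smaller disc. First I would set $g(z)=f(z)/z$, which is analytic on $\mathbb{D}$ with $g(0)=1$ because $f\in\mathcal{A}$. The hypothesis $|f(z)/z-1|<1$ says precisely that $\phi(z):=g(z)-1$ satisfies $\phi(0)=0$ and $|\phi(z)|<1$, so $\phi$ is a self-map of $\mathbb{D}$ fixing the origin. Note also that $g$ maps $\mathbb{D}$ into the disc $\{w:|w-1|<1\}$, which lies in the right half-plane $\{\Re w>0\}$; in particular $g$ never vanishes, so $f(z)=zg(z)=0$ only at $z=0$ and the quotient $zf'(z)/f(z)$ is well defined on $\mathbb{D}\setminus\{0\}$.

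The key identity I would exploit is
\[
\frac{zf'(z)}{f(z)}=1+\frac{zg'(z)}{g(z)}=1+\frac{z\phi'(z)}{1+\phi(z)},
\]
obtained from $f(z)=zg(z)$. To prove starlikeness on $\mathbb{D}_{1/2}$ it then suffices to show $\big|z\phi'(z)/(1+\phi(z))\big|<1$ there, since this forces $\Re\!\big(zf'(z)/f(z)\big)>0$; the starlikeness criterion on the subdisc is the usual criterion on $\mathbb{D}$ applied to the rescaled map $F(z)=2f(z/2)\in\mathcal{A}$. I would bound the three ingredients separately: the Schwarz lemma gives $|\phi(z)|\le|z|$; the Schwarz--Pick derivative estimate gives $|\phi'(z)|\le (1-|\phi(z)|^2)/(1-|z|^2)$; and the triangle inequality gives $|1+\phi(z)|\ge 1-|\phi(z)|$. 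Combining these and simplifying $\tfrac{1-|\phi|^2}{1-|\phi|}=1+|\phi|$ yields
\[
\left|\frac{z\phi'(z)}{1+\phi(z)}\right|\le\frac{|z|\,(1+|\phi(z)|)}{1-|z|^2}\le\frac{|z|(1+|z|)}{1-|z|^2}=\frac{|z|}{1-|z|}.
\]

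Since $t\mapsto t/(1-t)$ is increasing on $[0,1)$ and equals $1$ at $t=\tfrac12$, the last quantity is strictly less than $1$ precisely when $|z|<\tfrac12$. Hence $\Re\!\big(zf'(z)/f(z)\big)\ge 1-|z|/(1-|z|)>0$ on $\mathbb{D}_{1/2}$, and, together with $f(z)\neq 0$ for $0<|z|<\tfrac12$, this yields that $f$ is starlike in $\mathbb{D}_{1/2}$; univalence in $\mathbb{D}_{1/2}$ is then immediate since starlike functions are univalent. The main obstacle is controlling $\phi'$: the plain Schwarz lemma only bounds the values of $\phi$, so the sharp derivative estimate of Schwarz--Pick is essential, and it must be combined with the lower bound on $|1+\phi|$ in exactly the right way to collapse everything to the clean form $|z|/(1-|z|)$, whence the radius $\tfrac12$ emerges. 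This radius is sharp: for $f(z)=z+z^2$ the hypothesis holds while $zf'(z)/f(z)=(1+2z)/(1+z)$ vanishes at $z=-\tfrac12$, so $f$ is not starlike on any larger disc.
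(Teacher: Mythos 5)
Your proof is correct. Note first that the paper does not prove this lemma at all: it is quoted verbatim from MacGregor's 1963 paper (the citation \cite{MacGregor-PAMS1963} attached to the statement), so there is no internal proof to compare against; the relevant comparison is with MacGregor's original argument, and your route is essentially the classical one. Writing $f(z)/z=1+\phi(z)$ with $\phi$ a Schwarz function, using the identity $zf'(z)/f(z)=1+z\phi'(z)/(1+\phi(z))$, and combining the Schwarz lemma, the Schwarz--Pick derivative bound $|\phi'(z)|\leq (1-|\phi(z)|^2)/(1-|z|^2)$, and $|1+\phi(z)|\geq 1-|\phi(z)|$ to get
\[
\left|\frac{zf'(z)}{f(z)}-1\right|\leq \frac{|z|}{1-|z|}<1 \quad \text{for } |z|<\tfrac{1}{2}
\]
is exactly the standard mechanism behind this radius-of-starlikeness result. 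All the supporting details you supply are sound: $\Re\left(f(z)/z\right)>0$ forces $f(z)\neq 0$ for $0<|z|<1$, so the quotient is well defined; the rescaling $F(z)=2f(z/2)\in\mathcal{A}$ correctly transfers the criterion $\Re\left(zf'(z)/f(z)\right)>0$ on $\mathbb{D}_{1/2}$ to the usual starlikeness (hence univalence) criterion on $\mathbb{D}$; and $f(z)=z+z^2$ is the classical extremal function, since $f'(-1/2)=0$ rules out univalence on any larger disc, showing the radius $1/2$ is sharp. I have no corrections.
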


\begin{lemma}[\cite{MacGregor-PAMS1964}]\label{lm:convex-D1/2}
Let $f(z)\in\mathcal{A}$ and $|f'(z)-1|<1\;\;\forall\, z\in \mathbb{D}.$ Then $f(z)$ is a convex function in $\mathbb{D}_{1/2}=\{z:|z|<1/2\}$.
\end{lemma}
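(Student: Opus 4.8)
The plan is to establish the stronger pointwise bound $\left|\frac{zf''(z)}{f'(z)}\right|<1$ for every $z$ with $|z|<1/2$, since this places $1+\frac{zf''(z)}{f'(z)}$ inside the open disc of radius $1$ centred at $1$, which lies in the right half-plane and hence forces $\Re\left(1+\frac{zf''(z)}{f'(z)}\right)>0$, i.e. convexity on $\mathbb{D}_{1/2}$. So the whole problem is reduced to the modulus inequality $|zf''(z)|<|f'(z)|$.

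To exploit the hypothesis, I would set $g(z)=f'(z)-1$. Because $f\in\mathcal{A}$ gives $f'(0)=1$, we have $g(0)=0$, while $|f'(z)-1|<1$ says $g$ maps $\mathbb{D}$ into $\mathbb{D}$. The Schwarz lemma then yields a factorization $g(z)=z\phi(z)$ with $\phi$ analytic and $|\phi(z)|\le 1$ on $\mathbb{D}$, so that $f'(z)=1+z\phi(z)$ and $f''(z)=\phi(z)+z\phi'(z)$. Writing $r=|z|$ and $t=|\phi(z)|\in[0,1]$, the triangle inequality gives the lower bound $|f'(z)|\ge 1-rt$ and the upper bound $|zf''(z)|\le r\bigl(t+r|\phi'(z)|\bigr)$, while the Schwarz--Pick inequality applied to $\phi$ supplies $|\phi'(z)|\le (1-t^2)/(1-r^2)$.

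With these estimates in hand, proving $|zf''(z)|<|f'(z)|$ reduces to the elementary inequality $2rt+\frac{r^2(1-t^2)}{1-r^2}<1$. Clearing the positive denominator $1-r^2$, the left-hand side becomes a downward-opening quadratic $\Phi(t)=-r^2t^2+2r(1-r^2)t+2r^2$ in $t$, whose vertex sits at $t^{*}=(1-r^2)/r>1$ for $r<1/2$; hence $\Phi$ is increasing on $[0,1]$ and it suffices to check $t=1$. There the inequality $\Phi(1)<1$ rearranges to $2r^3-r^2-2r+1>0$, which factors as $(2r-1)(r-1)(r+1)>0$ and is manifestly true on $(0,1/2)$, degenerating to equality at $r=1/2$ in agreement with the extremal function $f(z)=z+z^2/2$, whose radius of convexity is exactly $1/2$. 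I expect the main obstacle to be the bound on $f''$: the crude Schwarz--Pick estimate $|g'(z)|\le 1/(1-r^2)$ is too weak to reach radius $1/2$, so the decisive step is to extract the vanishing of $g$ at the origin through the factorization $g=z\phi$ \emph{before} differentiating, which is precisely what sharpens the $t$-dependence enough to make the quadratic argument close.
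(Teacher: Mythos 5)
Your proof is correct, but note first that the paper offers no proof of this lemma at all: it is quoted from MacGregor's 1964 paper, so the only meaningful comparison is with the classical argument. Your chain goes through: reducing convexity on $\mathbb{D}_{1/2}$ to $|zf''(z)/f'(z)|<1$ is legitimate (a value $1+w$ with $|w|<1$ has positive real part, and $f'\neq 0$ throughout $\mathbb{D}$ since $\Re f'>0$); the Schwarz factorization $f'(z)=1+z\phi(z)$, the Schwarz--Pick bound on $\phi'$, and the quadratic analysis in $t=|\phi(z)|$ are all sound, with the boundary check at $t=1$ reducing to $(2r-1)(r^2-1)>0$ on $(0,1/2)$ and sharpness witnessed by $f(z)=z+z^2/2$. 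Two small repairs: the quadratic should read $\Phi(t)=-r^2t^2+2r(1-r^2)t+r^2$ (constant term $r^2$, not $2r^2$, since $r^2(1-t^2)=r^2-r^2t^2$); your vertex $t^*=(1-r^2)/r$, the value $\Phi(1)=2r(1-r^2)$, and the cubic $2r^3-r^2-2r+1$ are all consistent with the corrected constant, so nothing downstream breaks. You should also dispose of the degenerate case $|\phi(z_0)|=1$ at an interior point, where Schwarz--Pick as stated needs care: then $\phi$ is a unimodular constant, $\phi'\equiv 0$, and the estimate holds trivially.

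Your closing diagnosis, however, is off: the factorization $g=z\phi$ is not the decisive step, and Schwarz--Pick applied directly to $g=f'-1$ is \emph{not} too weak. What is too weak is discarding the numerator $1-|g(z)|^2$. Keeping it, and combining with the plain Schwarz bound $|g(z)|\le r$ and $|f'(z)|\ge 1-|g(z)|$, one gets in one line
\[
\left|\frac{zf''(z)}{f'(z)}\right|\le\frac{r\,|g'(z)|}{1-|g(z)|}\le\frac{r\bigl(1-|g(z)|^2\bigr)}{(1-r^2)\bigl(1-|g(z)|\bigr)}=\frac{r\bigl(1+|g(z)|\bigr)}{1-r^2}\le\frac{r(1+r)}{1-r^2}=\frac{r}{1-r},
\]
which is strictly less than $1$ exactly when $r<1/2$ --- no factorization, no quadratic optimization. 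This is essentially MacGregor's route. So your argument is a valid but longer detour: the cancellation you engineer by writing $g=z\phi$ before differentiating is already delivered by the factor $1-|g|^2$ in Schwarz--Pick paired with the denominator $1-|g|$.
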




\begin{lemma}[\cite{Ravichandran-Ganita-UCV-2002}]\label{lm:ucv-Sp}
Let $f(z)\in\mathcal{A}$.
\begin{enumerate}[(i)]
  \item If $\displaystyle\left|\frac{zf''(z)}{f'(z)}\right|<\frac{1}{2}$, then $f(z)\in UCV$.
  \item If $\displaystyle\left|\frac{zf'(z)}{f(z)}-1\right|<\frac{1}{2}$, then $f(z)\in \mathcal{S}_p$.
\end{enumerate}
\end{lemma}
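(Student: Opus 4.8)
The plan is to reduce each part to the well-known analytic characterizations of the classes $UCV$ and $\mathcal{S}_p$ due to R\o nning (see \cite{Ronning-PAMS-Uniformly-Convex-93}), namely that $f\in UCV$ if and only if
$$\Re\left(1+\frac{zf''(z)}{f'(z)}\right)\geq \left|\frac{zf''(z)}{f'(z)}\right|,\quad z\in\mathbb{D},$$
and that $f\in\mathcal{S}_p$ if and only if
$$\Re\left(\frac{zf'(z)}{f(z)}\right)\geq \left|\frac{zf'(z)}{f(z)}-1\right|,\quad z\in\mathbb{D}.$$
Once these characterizations are in hand, each implication follows from an elementary estimate based on the trivial inequality $\Re(\zeta)\geq -|\zeta|$ valid for every $\zeta\in\mathbb{C}$.

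For part (i), I would write $w=w(z)=zf''(z)/f'(z)$ and assume $|w|<1/2$ throughout $\mathbb{D}$. Since $\Re(w)\geq -|w|$, one obtains
$$\Re\left(1+w\right)=1+\Re(w)\geq 1-|w|.$$
Because $|w|<1/2$ forces $1-|w|>1/2>|w|$, I would conclude $\Re(1+w)\geq 1-|w|>|w|$, which is exactly R\o nning's condition for membership in $UCV$. Hence $f\in UCV$.

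For part (ii), I would set $u=u(z)=zf'(z)/f(z)$ and assume $|u-1|<1/2$ in $\mathbb{D}$. Writing $u=1+(u-1)$ and again invoking $\Re(u-1)\geq -|u-1|$ gives
$$\Re(u)=1+\Re(u-1)\geq 1-|u-1|.$$
As before, $|u-1|<1/2$ yields $1-|u-1|>1/2>|u-1|$, so that $\Re(u)\geq 1-|u-1|>|u-1|$, which is precisely the parabolic starlikeness condition characterizing $\mathcal{S}_p$. Therefore $f\in\mathcal{S}_p$.

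The computations themselves are routine; the only substantive ingredient is the correct statement of the analytic characterizations of $UCV$ and $\mathcal{S}_p$, which I would cite rather than reprove. The one point to check carefully is that the constant $1/2$ is exactly what is needed to make the strict inequalities $1-|w|>|w|$ and $1-|u-1|>|u-1|$ hold, since both reduce to the hypothesis being \emph{strictly} less than $1/2$; this also explains why $1/2$ is the natural threshold in the statement.
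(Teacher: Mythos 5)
Your proof is correct, and it matches the source: the paper states this lemma without proof (citing Ravichandran), and your reduction to R\o nning's parabolic characterizations of $UCV$ and $\mathcal{S}_p$ via the elementary bound $\Re(\zeta)\geq-|\zeta|$ is precisely the standard argument behind the cited result, with the constant $\tfrac{1}{2}$ arising exactly as you explain. The only cosmetic point is that the characterizations are usually stated with strict inequalities, $\Re\left(1+\frac{zf''(z)}{f'(z)}\right)>\left|\frac{zf''(z)}{f'(z)}\right|$ and $\Re\left(\frac{zf'(z)}{f(z)}\right)>\left|\frac{zf'(z)}{f(z)}-1\right|$, but since your estimates produce strict inequalities anyway, the argument is unaffected.
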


\begin{lemma}\label{lm:two-ineq}
For any $a,b>0$, the following inequalities hold:
\begin{align}\label{Eq35}
\dfrac{k}{a(a+1)\cdots(a+k-1)}&\leq\dfrac{1}{a(a+1)^{k-2}},\quad k\in\mathbb{N}\setminus\{1\},\\\label{Eq36}
\dfrac{1}{b(b+1)\cdots(b+k-1)}&\leq\dfrac{1}{b(b+1)^{k-1}},\quad k\in\mathbb{N}.
\end{align}
\end{lemma}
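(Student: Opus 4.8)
The plan is to prove each inequality by clearing the common denominator and reducing it to a comparison of two products having the same number of factors, where the larger side dominates the smaller one factor by factor. Since both claims take the shape (product of shifted terms) $\geq$ (a power of a single shifted term), the whole argument becomes elementary and purely multiplicative once the shared factors are cancelled.

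For the second inequality \eqref{Eq36}, I would first note that when $k=1$ both sides equal $1/b$, so equality holds. For $k\geq 2$, cross-multiplying and cancelling the common factor $b>0$ reduces the claim to
\[
(b+1)(b+2)\cdots(b+k-1)\;\geq\;(b+1)^{k-1}.
\]
Each side is a product of $k-1$ factors, and since $b+j\geq b+1$ for every $j\in\{1,\dots,k-1\}$, the left-hand product dominates the right-hand one term by term. This settles \eqref{Eq36}.

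For the first inequality \eqref{Eq35}, clearing denominators and cancelling $a>0$ reduces the claim to
\[
\prod_{j=1}^{k-1}(a+j)\;\geq\;k\,(a+1)^{k-2},\qquad k\geq 2.
\]
The key idea is to peel off the single largest factor and treat it separately from the rest. I would split the left-hand product as $(a+k-1)\prod_{j=1}^{k-2}(a+j)$. Because $a\geq 1$, the peeled factor satisfies $a+k-1\geq k$, which is precisely where the factor $k$ on the right is absorbed; the remaining $k-2$ factors each satisfy $a+j\geq a+1$, so their product is at least $(a+1)^{k-2}$. Multiplying these two estimates yields the claim, and the case $k=2$ is automatic because the residual product is then empty.

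The one pitfall I would consciously avoid is trying to establish \eqref{Eq35} by induction on $k$: carrying the hypothesis $k(a+1)^{k-2}\leq\prod_{j=1}^{k-1}(a+j)$ into the next step forces the sufficient condition $k^2-k-1\geq a$, which fails for large $a$, so the induction does not close uniformly in $a$. The direct factor-splitting argument sidesteps this obstacle by spending the entire factor $k$ on the single term $a+k-1\geq k$ rather than attempting to distribute it across all the factors of the product.
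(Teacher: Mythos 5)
Your proof is correct and takes essentially the same route as the paper's: the paper establishes \eqref{Eq35} by writing the quotient as a product of factors at least $1$, where its factor $\bigl(1+\frac{a-1}{k}\bigr)\geq 1$ is exactly your key step $a+k-1\geq k$ absorbing the $k$, and the remaining factors $\bigl(1+\frac{j}{a+1}\bigr)\geq 1$ encode your term-by-term bound $a+j\geq a+1$; the same correspondence holds for \eqref{Eq36}. Multiplying the paper's product through by $a(a+1)^{k-2}$ (resp.\ $b(b+1)^{k-1}$) recovers your cross-multiplied comparison verbatim, so there is nothing to fix.
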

\begin{proof}
Under the given hypothesis, it can be observed that
\begin{align}\label{eq:ineq-part-1}
1\left(1+\dfrac{1}{a+1} \right)\left(1+\dfrac{2}{a+1} \right)\cdots \left(1+\dfrac{k-3}{a+1} \right) \left(1+\dfrac{a-1}{k} \right)\geq1.
\end{align}
Multiplying both sides of \eqref{eq:ineq-part-1} by $a(a+1)^{k-2}$, we obtain
\begin{align*}
a(a+1)(a+2)\cdots (a+k-2)\dfrac{(a+k-1)}{k}\geq a(a+1)^{k-2},\quad\mbox{ for }k\geq2,
\end{align*}
which proves the inequality  \eqref{Eq35}.

It can be noted that under the given hypothesis, following inequality holds true:
\begin{align}\label{eq:ineq-part-2}
1\left(1+\dfrac{1}{b+1} \right)\left(1+\dfrac{2}{b+1} \right)\cdots \left(1+\dfrac{k-2}{b+1} \right)\geq1.
\end{align}
Multiplying both sides of \eqref{eq:ineq-part-2} by $b(b+1)^{k-1}$, we obtain
$$
b(b+1)(b+2)\cdots (b+k-1)\geq b(b+1)^{k-1},\quad\mbox{ for }k\geq1,
$$
which proves the inequality \eqref{Eq36}.
\end{proof}
\section{Starlikeness, convexity and uniform convexity}\label{sec-new-method}
To prove some of the main results, we need the Fox-Wright function ${}_p\Psi_q[z]$, defined by \cite[p. 4, Eq. (2.4)]{Wright-Gen-Bessel-PLMS-1935}
\begin{equation}\label{11}
{}_p\Psi_q\Big[_{(b_1,B_1),...,(b_q,  B_q)}^{(a_1,A_1),...,(a_p,  A_p)}\Big|z \Big]={}_p\Psi_q\Big[_{(b_q,  B_q)}^{(a_p,  A_p)}\Big|z \Big]=\sum_{k=0}^\infty\frac{\prod_{i=1}^p\Gamma(a_i+kA_i)}{\prod_{j=1}^q\Gamma(b_j+kB_j)}\frac{z^k}{k!},
\end{equation}
where $A_i$, $B_j\in \mathbb{R}^+$  $(i=1,...,p,j=1,...,q)$ and $a_i, b_j\in \mathbb{C}.$
The series (\ref{11}) converges uniformly and absolutely for all bounded $|z|, z\in \mathbb{C}$ when
$$\epsilon=1+\sum_{j=1}^q B_j-\sum_{j=1}^p A_j>0.$$

In \cite[Theorem 4]{PS}, the authors established the following two-sided inequality
\begin{equation}\label{fi}
\psi_0e^{\psi_1\psi_0^{-1}|z|}\leq{}_p\Psi_q\Big[_{(b_q,  B_q)}^{(a_p,  A_p)}\Big|z \Big]\leq\psi_0-(1-e^{|z|})\psi_1,
\end{equation}
is valid for all $z\in \mathbb{R}$ and for all ${}_p\Psi_q[z]$ satisfying
\begin{equation}
\psi_1>\psi_2\;\;\textrm{and}\;\;\psi_1^2<\psi_0\psi_2.
\end{equation}
Here,
$$\psi_k=\frac{\prod_{j=1}^p\Gamma(a_j+kAj)}{\prod_{j=1}^q\Gamma(b_j+kB_j)},\;k=0,1,2.$$

\begin{theorem}\label{KT1}
Assume that $a, b, \mu,\nu>0$. Suppose that the following conditions hold:
\begin{displaymath}
(H_1):\left\{ \begin{array}{ll}
(i)& \frac{\Gamma(a+2\mu)}{\Gamma(a+3\mu)}<  \frac{\Gamma(b+3\nu)}{3\Gamma(b+2\nu)},\\
(ii) & \frac{\Gamma(a+\mu)\Gamma(a+3\mu)}{\Gamma^2(a+2\mu)}<\frac{3\Gamma^2(b+2\nu)}{2\Gamma(b+\nu)\Gamma(b+3\nu)},\\
(iii)& \frac{2\Gamma(a)\Gamma(b)}{\Gamma(a+\mu)\Gamma(b+\nu)}+\frac{3(e-1)\Gamma(a)\Gamma(b)}{\Gamma(a+2\mu)\Gamma(b+2\nu)}<1.
\end{array} \right.
\end{displaymath}
Then the function $\mathbb{W}_{(\mu,a),(\nu,b)}(z)$ is starlike in $\mathbb{D}.$
\end{theorem}
\begin{proof}
Let us assume that $q(z)=\dfrac{z\mathbb{W}'_{(\mu,a),(\nu,b)}(z)}{\mathbb{W}_{(\mu,a),(\nu,b)}(z)}$, $z\in\mathbb{D}$. Then $q(z)$ is analytic in $\mathbb{D}$ and
$q(0)=1$. To prove that $\Re(q(z))>0$, it is enough to show that $|q(z)-1|<1$.

From \eqref{eq:normalized-Wright-four-parameter}, we get
\begin{equation}\label{K1}
\begin{split}
\left|\mathbb{W}^\prime_{(\mu,a),(\nu,b)}(z)-\frac{\mathbb{W}_{(\mu,a),(\nu,b)}(z)}{z}\right|&\leq \sum_{k=0}^\infty \frac{(k+1)\Gamma(a)\Gamma(b)}{\Gamma(a+\mu+k\mu)\Gamma(b+\nu+k\nu)}\\
&=\Gamma(a)\Gamma(b){}_1\Psi_2\Big[ \begin{array}{c}(2, 1)\\(a+\mu, \mu), (b+\nu, \nu)
			   \end{array} \Big|1\Big].
\end{split}
\end{equation}
In our case
$$\psi_0=\frac{\Gamma(2)}{\Gamma(a+\mu)\Gamma(b+\nu)},\;\psi_1=\frac{\Gamma(3)}{\Gamma(a+2\mu)\Gamma(b+2\nu)},\;\textrm{and}\;\psi_2=\frac{\Gamma(4)}{\Gamma(a+3\mu)\Gamma(b+3\nu)}.$$
It is easy to see that the hypotheses $``(H_1): (i), (ii)"$ are equivalent to $\psi_2<\psi_1$ and $\psi_1^2<\psi_0\psi_2$, and consequently
\begin{equation}\label{eq:kkk2}
{}_1\Psi_2\Big[ \begin{array}{c}(2, 1)\\(a+\mu, \mu), (b+\nu, \nu)
			   \end{array} \Big|1\Big]\leq\frac{1}{\Gamma(a+\mu)\Gamma(b+\nu)}-\frac{2(1-e)}{\Gamma(a+2\mu)\Gamma(b+2\nu)}.
\end{equation}
Combining \eqref{K1} and \eqref{eq:kkk2}, we obtain
$$
\left|\mathbb{W}^\prime_{(\mu,a),(\nu,b)}(z)-\frac{\mathbb{W}_{(\mu,a),(\nu,b)}(z)}{z}\right|< \frac{\Gamma(a)\Gamma(b)}{\Gamma(a+\mu)\Gamma(b+\nu)}-\frac{2(1-e)\Gamma(a)\Gamma(b)}{\Gamma(a+2\mu)\Gamma(b+2\nu)},\;\textrm{for\;all}\;z\in\mathbb{D}.
$$

Now, using \eqref{eq:normalized-Wright-four-parameter} and the triangle inequality $|z_1+z_2|\geq||z_1|-|z_2||,$ we have
\begin{equation}\label{ratio-w-str}
\begin{split}
\left| \dfrac{\mathbb{W}_{(\mu,a),(\nu,b)}(z)}{z}\right|
&\geq 1-\left| \sum_{k=1}^{\infty}\dfrac{\Gamma(a)\Gamma(b)z^k}{\Gamma(a+k\mu)\Gamma(b+k\nu)} \right|\\
&\geq1- \sum_{k=0}^{\infty}\dfrac{\Gamma(a)\Gamma(b)}{\Gamma(a+\mu+k\mu)\Gamma(b+\nu+k\nu)} \\
&=1- \Gamma(a)\Gamma(b)\sum_{k=0}^{\infty}\dfrac{\Gamma(k+1)}{\Gamma(a+\mu+k\mu)\Gamma(b+\nu+k\nu)k!}\\
&=1-\Gamma(a)\Gamma(b) {}_1\Psi_2\Big[ \begin{array}{c}(1, 1)\\(a+\mu, \mu), (b+\nu, \nu)
			   \end{array} \Big|1\Big].
\end{split}
\end{equation}
In this case, $\psi_0=\frac{1}{\Gamma(a+\mu)\Gamma(b+\nu)}$, $\psi_1=\frac{1}{\Gamma(a+2\mu)\Gamma(b+2\nu)}$
and $\psi_2=\frac{2}{\Gamma(a+3\mu)\Gamma(b+3\nu)}$. Clearly, the inequalities $\psi_2<\psi_1$ and $\psi_1^2<\psi_0\psi_2$
are satisfied under the given hypothesis. Using \eqref{fi}, we have
\begin{align}\label{ineq-1-psi-2}
{}_1\Psi_2\Big[ \begin{array}{c}(1, 1)\\(a+\mu, \mu), (b+\nu, \nu)
			   \end{array} \Big|1\Big]
\leq \frac{1}{\Gamma(a+\mu)\Gamma(b+\nu)}+\frac{(e-1)}{\Gamma(a+2\mu)\Gamma(b+2\nu)}
\end{align}
Combining \eqref{ratio-w-str} and \eqref{ineq-1-psi-2}, we have
\begin{align}\label{w-z}
\left| \dfrac{\mathbb{W}_{(\mu,a),(\nu,b)}(z)}{z}\right|\geq
1- \left[\frac{\Gamma(a)\Gamma(b)}{\Gamma(a+\mu)\Gamma(b+\nu)}  + \frac{(e-1)\Gamma(a)\Gamma(b)}{\Gamma(a+2\mu)\Gamma(b+2\nu)}\right]>0,
\end{align}
under the given hypothesis $(H_1:(iii))$.
Now, using \eqref{K1} and \eqref{w-z}, we obtain
\begin{align*}
\left|q(z)-1 \right|&=\left|\dfrac{z\mathbb{W}'_{(\mu,a),(\nu,b)}(z)}{\mathbb{W}_{(\mu,a),(\nu,b)}(z)}-1 \right|
=\left|\dfrac{\mathbb{W}'_{(\mu,a),(\nu,b)}(z)-\dfrac{\mathbb{W}_{(\mu,a),(\nu,b)}(z)}{z}}{\dfrac{\mathbb{W}_{(\mu,a),(\nu,b)}(z)}{z}} \right|\\
&< \left[\frac{\Gamma(a)\Gamma(b)}{\Gamma(a+\mu)\Gamma(b+\nu)}+\frac{2(e-1)\Gamma(a)\Gamma(b)}{\Gamma(a+2\mu)\Gamma(b+2\nu)}\right]
\left[ 1- \frac{\Gamma(a)\Gamma(b)}{\Gamma(a+\mu)\Gamma(b+\nu)}  - \frac{(e-1)\Gamma(a)\Gamma(b)}{\Gamma(a+2\mu)\Gamma(b+2\nu)}  \right]^{-1}\\
&<1,
\end{align*}

under the given hypothesis. This proves the theorem.
\end{proof}

\begin{theorem}\label{KT2}
Let $a,b,\mu$ and $\nu$ be positive real numbers. Also suppose that the following conditions hold:
\begin{displaymath}
(H_2):\left\{ \begin{array}{ll}
(i)& \frac{\Gamma(a+2\mu)}{\Gamma(a+3\mu)}<  \frac{3\Gamma(b+3\nu)}{8\Gamma(b+2\nu)},\\
(ii) & \frac{\Gamma(a+\mu)\Gamma(a+3\mu)}{\Gamma^2(a+2\mu)}<\frac{16\Gamma^2(b+2\nu)}{9\Gamma(b+\nu)\Gamma(b+3\nu)},\\
(iii)& \frac{2\Gamma(a)\Gamma(b)}{\Gamma(a+\mu)\Gamma(b+\nu)}-\frac{3(1-e)\Gamma(a)\Gamma(b)}{\Gamma(a+2\mu)\Gamma(b+2\nu)}<1.
\end{array} \right.
\end{displaymath}
Then the function $\mathbb{W}_{(\mu,a),(\nu,b)}(z)$ is convex in $\mathbb{D}_{\frac{1}{2}}.$
\end{theorem}
\begin{proof}Let $z\in\mathbb{D}$, then we get
\begin{equation}\label{K2}
\begin{split}
\left|\mathbb{W}_{(\mu,a),(\nu,b)}^\prime(z)-1\right|&\leq\sum_{k=1}^\infty\frac{(k+1)\Gamma(a)\Gamma(b)}{\Gamma(a+\mu k)\Gamma(b+k\nu)}\\
&=\sum_{k=0}^\infty\frac{(k+2)k!\Gamma(a)\Gamma(b)}{k!\Gamma(a+\mu+\mu k)\Gamma(b+\nu+k\nu)}\\
&=\Gamma(a)\Gamma(b){}_2\Psi_3\Big[ \begin{array}{c}(1, 1), (3,1)\\(2,1),(a+\mu, \mu), (b+\nu, \nu)
			   \end{array} \Big|1\Big].
\end{split}
\end{equation}
In this case,
$$\psi_0=\frac{2}{\Gamma(a+\mu)\Gamma(b+\nu)}, \psi_1=\frac{3}{\Gamma(a+2\mu)\Gamma(b+2\nu)},\;\textrm{and}\;\psi_2=\frac{8}{\Gamma(a+3\mu)\Gamma(b+3\nu)}.$$
Hence, the conditions $``(H_2): (i), (ii)"$ imply that
\begin{equation}\label{K3}
{}_2\Psi_3\Big[ \begin{array}{c}(1, 1), (3,1)\\(2,1),(a+\mu, \mu), (b+\nu, \nu)
			   \end{array} \Big|1\Big]\leq\frac{2}{\Gamma(a+\mu)\Gamma(b+\nu)}-\frac{3(1-e)}{\Gamma(a+2\mu)\Gamma(b+2\nu)}.
				\end{equation}
Hence, combining the hypotheses $``(H_2): (iii)"$, (\ref{K2}) and (\ref{K3}), we have
$$\left|\mathbb{W}_{(\mu,a),(\nu,b)}^\prime(z)-1\right|<1.$$
Therefore, the function $\mathbb{W}_{(\mu,a),(\nu,b)}(z)$ is convex on $\mathbb{D}_{\frac{1}{2}},$ by means of Lemma \ref{lm:convex-D1/2}.
\end{proof}

\begin{corollary}\label{CKT2} Let $a>\frac{2}{7}.$  If $b>\frac{2a+20}{7a-2},$ then the function $\mathbb{W}_{(1,a),(1,b)}(z)$ is convex on $\mathbb{D}_{\frac{1}{2}}.$
\end{corollary}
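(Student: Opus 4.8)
The plan is to show that the two hypotheses $a>\frac{2}{7}$ and $b>\frac{2a+20}{7a-2}$ are precisely what forces all three conditions of $(H_2)$ in Theorem \ref{KT2} to hold once we set $\mu=\nu=1$; the conclusion is then immediate from that theorem. The first step is to rewrite $(H_2)(i)$, $(ii)$, $(iii)$ with $\mu=\nu=1$ using the elementary ratios $\Gamma(a+2)/\Gamma(a+3)=1/(a+2)$ and $\Gamma(b+3)/\Gamma(b+2)=b+2$, the identities $\Gamma(a+1)\Gamma(a+3)/\Gamma^2(a+2)=(a+2)/(a+1)$ and $\Gamma^2(b+2)/(\Gamma(b+1)\Gamma(b+3))=(b+1)/(b+2)$, and $\Gamma(a)/\Gamma(a+1)=1/a$, $\Gamma(a)/\Gamma(a+2)=1/(a(a+1))$.

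The key step is condition $(ii)$, which after these reductions reads $\frac{a+2}{a+1}<\frac{16}{9}\cdot\frac{b+1}{b+2}$. Clearing denominators turns this into $b(7a-2)>2a+20$. Since the factor $\frac{b+1}{b+2}$ is bounded above by $1$, the inequality is solvable in $b$ exactly when $\frac{9(a+2)}{16(a+1)}<1$, i.e. $7a>2$; and for $a>\frac{2}{7}$ it is equivalent to $b>\frac{2a+20}{7a-2}$. This is exactly the transcription of $(H_2)(ii)$ and explains the form of the corollary's hypotheses.

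It then remains to verify that $(i)$ and $(iii)$ follow automatically. Condition $(i)$ reduces to $b>\frac{8}{3(a+2)}-2$, whose right-hand side is negative for every $a>\frac{2}{7}$, so it holds trivially for positive $b$. For $(iii)$, which becomes $\frac{2}{ab}+\frac{3(e-1)}{a(a+1)b(b+1)}<1$ (note $1-e<0$, so both terms are positive), the essential ingredient is a lower bound on $ab$. From $b>\frac{2a+20}{7a-2}$ one gets $ab>g(a):=\frac{2a^2+20a}{7a-2}$, and minimizing $g$ over $a>\frac{2}{7}$ — the critical point solves $7a^2-4a-20=0$, giving $a=2$ with $g(2)=4$ — yields $ab>4$. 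Hence $\frac{2}{ab}<\frac12$, and since $(a+1)(b+1)>ab$ gives $a(a+1)b(b+1)>(ab)^2>16$, the second term is below $\frac{3(e-1)}{16}$; the total is therefore at most $\frac12+\frac{3(e-1)}{16}=\frac{5+3e}{16}<1$.

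I expect the main obstacle to be the estimate for $(iii)$: unlike $(i)$ and $(ii)$, it is a genuinely nonlinear constraint in $a$ and $b$, and the clean bound $ab>4$ rests on correctly locating the minimum of $g(a)=\frac{2a^2+20a}{7a-2}$ at $a=2$. Once $ab>4$ is in hand the remaining inequalities are routine, and Theorem \ref{KT2} then delivers convexity of $\mathbb{W}_{(1,a),(1,b)}(z)$ on $\mathbb{D}_{1/2}$.
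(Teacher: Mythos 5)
Your proposal is correct, and its skeleton coincides with the paper's: specialize Theorem \ref{KT2} to $\mu=\nu=1$, note that $(H_2)(i)$ holds automatically, and check that $(H_2)(ii)$ transcribes exactly to $b(7a-2)>2a+20$, i.e.\ $b>\frac{2a+20}{7a-2}$ for $a>\frac{2}{7}$ --- all of which matches the paper. Where you genuinely diverge is the treatment of $(H_2)(iii)$. The paper solves $(iii)$ explicitly as a quadratic inequality in $b$, obtaining a threshold function $g_1(a)$, and then argues that $b>\max(f_1(a),g_1(a))=f_1(a)$ by appealing to the monotonicity of $f_1$ and $g_1$ on $(2/7,\infty)$ together with their endpoint limits ($f_1\to\infty$ at $2/7^+$ while $g_1(2/7^+)\approx 9.63$, etc.). As stated, that justification is loose: two decreasing functions with those endpoint behaviours could in principle cross, so the pointwise inequality $f_1\geq g_1$ is asserted rather than proved. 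Your route avoids this entirely: from $b>f_1(a)$ you extract the uniform bound $ab>\min_{a>2/7}\frac{2a^2+20a}{7a-2}=4$ (critical point $7a^2-4a-20=0$, i.e.\ $a=2$, with value $4$ --- this computation checks out), whence $\frac{2}{ab}<\frac12$ and, using $(a+1)(b+1)>ab$, the second term of $(iii)$ is below $\frac{3(e-1)}{16}$, giving a total under $\frac{5+3e}{16}<1$. This is a self-contained verification with an explicit numerical margin, and it buys rigor exactly where the paper's argument is weakest; the paper's version, in exchange, yields the sharp threshold $g_1(a)$ for $(iii)$ rather than just the fact that it is dominated. One cosmetic caveat: your bound $ab>4$ is strict in all cases (if $a=2$ then $b>f_1(2)$ gives $ab>4$ directly; otherwise $ab>g(a)>4$), so the strict inequalities you need do hold, but it is worth saying this explicitly since the minimum is attained at $a=2$.
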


\begin{proof}
Setting $\mu=\nu=1$ in Theorem \ref{KT2},  we observe that the condition $``(H_2): (i)"$ holds true for all $a,b>0.$ Moreover, it is clear that the condition $``(H_2): (ii)"$ is equivalent to the following inequality
$$b>\frac{2a+20}{7a-2}:=f_1(a),\;\textrm{when}\;a>\frac{2}{7}.$$
Straightforward calculation yields that the assumption $"(H_2): (iii)"$ is equivalent to
$$b>\frac{-(a^2-a-2)+\sqrt{(a^2-a-1)^2+4a(a+1)(2a-1+3e)}}{2a(a+1)}:=g_1(a).$$
By using that fact that the functions $f_1(a)$ and $g_1(a)$ are decreasing on $(2/7,\infty)$ such that
$$\lim_{a\rightarrow (\frac{2}{7})^+} f_1(a)=\infty,\;\lim_{a\rightarrow \infty} f_1(a)=\frac{2}{7}, \lim_{a\rightarrow (\frac{2}{7})^+} g_1(a)\approx 9.63\;\;\textrm{and}\;\;\lim_{a\rightarrow\infty} g_1(a)=0.$$
Therefore
$$b>\max_{a>\frac{2}{7}}(f_1(a), g_1(a))=f_1(a).$$
\end{proof}

Putting $a=14$ in Corollary \ref{CKT2}, we obtain the following example.
\begin{example}
If $b>\frac{1}{2},$ then the function $\mathbb{W}_{(1,14),(1,b)}(z)$ is convex on $\mathbb{D}_{\frac{1}{2}}.$
\end{example}

\begin{theorem} Assume that the conditions of the above Theorem $(H_2): (i), (ii)$ hold true. Also, suppose that
$$\frac{2\Gamma(a)\Gamma(b)}{\Gamma(a+\mu)\Gamma(b+\nu)}-\frac{3(1-e)\Gamma(a)\Gamma(b)}{\Gamma(a+2\mu)\Gamma(b+2\nu)}<\frac{2}{\sqrt{5}}.$$
Then the function $\mathbb{W}_{(\mu,a),(\nu,b)}(z)$ is starlike in $\mathbb{D}.$
\end{theorem}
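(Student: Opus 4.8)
The plan is to follow verbatim the route of Theorem \ref{KT2}, but to swap the convexity criterion for Mocanu's starlikeness criterion, Lemma \ref{lm:starlike-derivative-mocannu}. The point is that the hypotheses $(H_2):(i),(ii)$ are identical, and only the right-hand bound in $(H_2):(iii)$ is replaced (from $1$ to $2/\sqrt{5}$), so the entire analytic estimate already carried out carries over unchanged; what differs is the geometric conclusion drawn from it.

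First I would note that $\mathbb{W}_{(\mu,a),(\nu,b)}(z)\in\mathcal{A}$ by the normalization \eqref{eq:normalized-Wright-four-parameter}, so that Lemma \ref{lm:starlike-derivative-mocannu} is applicable once $|\mathbb{W}'_{(\mu,a),(\nu,b)}(z)-1|$ is suitably controlled. Next I would reproduce the estimate \eqref{K2} from the proof of Theorem \ref{KT2}: for $z\in\mathbb{D}$, term-by-term majorization of the derivative series gives
$$
\left|\mathbb{W}_{(\mu,a),(\nu,b)}^\prime(z)-1\right|\leq \Gamma(a)\Gamma(b)\,{}_2\Psi_3\Big[ \begin{array}{c}(1, 1), (3,1)\\(2,1),(a+\mu, \mu), (b+\nu, \nu)\end{array} \Big|1\Big].
$$
Since the parameters feeding the Fox--Wright function are the same, the identifications $\psi_0=2/(\Gamma(a+\mu)\Gamma(b+\nu))$, $\psi_1=3/(\Gamma(a+2\mu)\Gamma(b+2\nu))$ and $\psi_2=8/(\Gamma(a+3\mu)\Gamma(b+3\nu))$ are unchanged, and the assumptions $(H_2):(i),(ii)$ are precisely the conditions $\psi_2<\psi_1$ and $\psi_1^2<\psi_0\psi_2$ required for the upper estimate in \eqref{fi}. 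Hence the bound \eqref{K3} holds, yielding
$$
\left|\mathbb{W}_{(\mu,a),(\nu,b)}^\prime(z)-1\right|\leq \frac{2\Gamma(a)\Gamma(b)}{\Gamma(a+\mu)\Gamma(b+\nu)}-\frac{3(1-e)\Gamma(a)\Gamma(b)}{\Gamma(a+2\mu)\Gamma(b+2\nu)},\quad z\in\mathbb{D}.
$$

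Finally, the new hypothesis asserts exactly that this right-hand side is strictly less than $2/\sqrt{5}$, so $|\mathbb{W}'_{(\mu,a),(\nu,b)}(z)-1|<2/\sqrt{5}$ throughout $\mathbb{D}$. Invoking Lemma \ref{lm:starlike-derivative-mocannu} then delivers that $\mathbb{W}_{(\mu,a),(\nu,b)}(z)$ is starlike in $\mathbb{D}$, completing the argument.

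There is essentially no genuine obstacle here; the only thing worth flagging is the interpretive point rather than a technical one. Because $2/\sqrt{5}\approx 0.894<1$, the present hypothesis is strictly more demanding on the parameters $a,b,\mu,\nu$ than $(H_2):(iii)$ of Theorem \ref{KT2}, yet it purchases the stronger conclusion of starlikeness on the full disk $\mathbb{D}$ in place of mere convexity on $\mathbb{D}_{1/2}$. The whole content of the proof is thus the recognition that the identical derivative estimate, when compared against the sharper Mocanu threshold $2/\sqrt{5}$, upgrades the geometric output; no new computation of the Fox--Wright majorant is needed.
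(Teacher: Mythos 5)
Your proposal is correct and is exactly the argument the paper intends: its proof of this theorem consists of the single remark that one repeats the proof of Theorem \ref{KT2} with Lemma \ref{lm:starlike-derivative-mocannu} in place of Lemma \ref{lm:convex-D1/2}, which is precisely what you carried out. You have simply supplied the details (the derivative estimate, the unchanged $\psi_0,\psi_1,\psi_2$ identifications under $(H_2):(i),(ii)$, and the comparison with the threshold $2/\sqrt{5}$) that the paper omits.
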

\begin{proof}The proof is similar to the proof of the above Theorem \ref{KT2} when we used Lemma \ref{lm:starlike-derivative-mocannu}, we omit the details.
\end{proof}
\begin{theorem}\label{KT4}
Suppose that $a,b,\mu$ and $\nu$ are positive real numbers. Assume that the following conditions hold:
\begin{displaymath}
(H_3):\left\{ \begin{array}{ll}
(i)& \frac{\Gamma(a+2\mu)}{\Gamma(a+3\mu)}<  \frac{\Gamma(b+3\nu)}{2\Gamma(b+2\nu)},\\
(ii) & \frac{\Gamma(a+\mu)\Gamma(a+3\mu)}{\Gamma^2(a+2\mu)}<\frac{2\Gamma^2(b+2\nu)}{\Gamma(b+\nu)\Gamma(b+3\nu)},\\
(iii)& \frac{\Gamma(a)\Gamma(b)}{\Gamma(a+\mu)\Gamma(b+\nu)}-\frac{(1-e)\Gamma(a)\Gamma(b)}{\Gamma(a+2\mu)\Gamma(b+2\nu)}<1.
\end{array} \right.
\end{displaymath}
Then the function $\mathbb{W}_{(\mu,a),(\nu,b)}(z)$ is starlike in $\mathbb{D}_{\frac{1}{2}}.$
\end{theorem}
\begin{proof}
It can be verified that
\begin{equation}\label{KKK}
\begin{split}
\left|\frac{\mathbb{W}_{(\mu,a),(\nu,b)}(z)}{z}-1\right|&\leq \sum_{k=0}^\infty\frac{\Gamma(a)\Gamma(b)}{\Gamma(a+\mu+k\mu)\Gamma(b+\nu+k\nu)}\\
&=\Gamma(a)\Gamma(b){}_1\Psi_2\Big[ \begin{array}{c}(1, 1)\\(a+\mu, \mu), (b+\nu, \nu)
			   \end{array} \Big|1\Big].
\end{split}
\end{equation}
In this case,
$$\psi_0=\frac{1}{\Gamma(a+\mu)\Gamma(b+\nu)}, \psi_1=\frac{1}{\Gamma(a+2\mu)\Gamma(b+2\nu)},\;\textrm{and}\;\psi_2=\frac{2}{\Gamma(a+3\mu)\Gamma(b+3\nu)},$$
which is equivalent to $``(H_3): (i), (ii)".$  This implies that
$$
{}_1\Psi_2\Big[ \begin{array}{c}(1, 1)\\(a+\mu, \mu), (b+\nu, \nu)
			   \end{array} \Big|1\Big]\leq\frac{1}{\Gamma(a+\mu)\Gamma(b+\nu)}-\frac{(1-e)}{\Gamma(a+2\mu)\Gamma(b+2\nu)}.$$
Using the above inequality (\ref{KKK}) and hypothesis $``(H_3): (iii)"$, we obtain		
$$\left|\frac{\mathbb{W}_{(\mu,a),(\nu,b)}(z)}{z}-1\right|<1,$$
for all $z\in\mathbb{D},$ which implies that the function $\mathbb{W}_{(\mu,a),(\nu,b)}(z)$ is starlike in $\mathbb{D}_{\frac{1}{2}},$
by Lemma \ref{lm:univalent-D1/2}.
\end{proof}

\begin{corollary} Let $a,b>0.$ If $ab>2$, then $\mathbb{W}_{(1,a),(1,b)}(z)$ is starlike in $\mathbb{D}_{\frac{1}{2}}.$
\end{corollary}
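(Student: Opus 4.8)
The plan is to specialize Theorem \ref{KT4} to the case $\mu=\nu=1$, exactly as Corollary \ref{CKT2} was deduced from Theorem \ref{KT2}. Thus the entire task reduces to showing that, for every $a,b>0$ with $ab>2$, the three hypotheses $(H_3):(i),(ii),(iii)$ are satisfied; the conclusion then follows verbatim from Theorem \ref{KT4}, i.e.\ starlikeness in $\mathbb{D}_{1/2}$ via Lemma \ref{lm:univalent-D1/2}.

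First I would clear all the Gamma factors using $\Gamma(x+1)=x\Gamma(x)$. With $\mu=\nu=1$ one finds $\Gamma(a+2)/\Gamma(a+3)=1/(a+2)$, $\Gamma(b+3)/\Gamma(b+2)=b+2$, $\Gamma(a+1)\Gamma(a+3)/\Gamma^2(a+2)=(a+2)/(a+1)$, and $\Gamma^2(b+2)/(\Gamma(b+1)\Gamma(b+3))=(b+1)/(b+2)$. Hence $(H_3):(i)$ becomes $(a+2)(b+2)>2$, which is automatic for $a,b>0$, while $(H_3):(ii)$ becomes $(a+2)(b+2)<2(a+1)(b+1)$; expanding both sides, the $2a+2b$ terms cancel and one is left with $ab+4<2ab+2$, i.e.\ precisely $ab>2$. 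So two of the three conditions are either free or identical to the hypothesis.

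The real content is $(H_3):(iii)$. Using $\Gamma(a)\Gamma(b)/(\Gamma(a+1)\Gamma(b+1))=1/(ab)$ and $\Gamma(a)\Gamma(b)/(\Gamma(a+2)\Gamma(b+2))=1/(ab(a+1)(b+1))$, and recalling that $1-e<0$, this condition rewrites as
$$\frac{1}{ab}\left(1+\frac{e-1}{(a+1)(b+1)}\right)<1.$$
I would estimate the two factors separately under the hypothesis $ab>2$. Directly, $1/(ab)<1/2$. For the bracket I would invoke the AM--GM inequality: since $(a+1)(b+1)=ab+a+b+1\ge ab+2\sqrt{ab}+1=(\sqrt{ab}+1)^2>(1+\sqrt2)^2=3+2\sqrt2$, one gets $1+(e-1)/((a+1)(b+1))<1+(e-1)/(3+2\sqrt2)$. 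Multiplying the two bounds yields a quantity strictly below $1$, which establishes $(iii)$.

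The only delicate step is $(iii)$, the single condition that genuinely involves the constant $e$. The naive estimate $(a+1)(b+1)>1$ is too weak here — it would only force $ab>e$ rather than $ab>2$ — so the sharper lower bound $(a+1)(b+1)>3+2\sqrt2$ coming from AM--GM is exactly what makes the hypothesis $ab>2$ sufficient. Once all of $(H_3)$ is verified, Theorem \ref{KT4} immediately gives that $\mathbb{W}_{(1,a),(1,b)}(z)$ is starlike in $\mathbb{D}_{1/2}$.
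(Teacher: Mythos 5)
Your proposal is correct and follows essentially the same route as the paper: both specialize Theorem \ref{KT4} to $\mu=\nu=1$, observe that $(H_3)$: (i) holds automatically for $a,b>0$, reduce $(H_3)$: (ii) exactly to $ab>2$, and then verify $(H_3)$: (iii) under that hypothesis. The only divergence is in this last step, where the paper solves (iii) as a quadratic in $b$ and asserts without detail that the resulting root stays below $2/a$ for all $a>0$, whereas you use the AM--GM bound $(a+1)(b+1)\geq(\sqrt{ab}+1)^2>3+2\sqrt{2}$ to get $\frac{1}{ab}\left(1+\frac{e-1}{(a+1)(b+1)}\right)<\frac{1}{2}\left(1+\frac{e-1}{3+2\sqrt{2}}\right)<1$ directly --- a fully justified and arguably cleaner verification of the same condition.
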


\begin{proof}
Set $\mu=\nu=1$ in Theorem \ref{KT4}. Then the condition $``(H_3): (i)"$ is valid for each $a,b>0$ and the assumption $``(H_3): (ii)"$ holds true for all $ab>2.$ Further, the hypothesis $``(H_3): (iii)"$ is equivalent to the following inequality:
$$b>\frac{1-a^2+\sqrt{(a^2-1)^2+4(a+e)(a^2+a)}}{2a(a+1)}.$$
Consequently,
$$b>\max_{a>0}\left(\frac{2}{a}, \frac{1-a^2+\sqrt{(a^2-1)^2+4(a+e)(a^2+a)}}{2a(a+1)}\right)=\frac{2}{a}.$$
\end{proof}

\begin{example} If $b>2,$ then the function  $\mathbb{W}_{(1,1),(1,b)}(z)$ is starlike in $\mathbb{D}_{\frac{1}{2}}.$
\end{example}
\begin{example}The function $\mathbb{W}_{(1,\sqrt{2}),(1,\sqrt{3})}(z)$ is starlike in $\mathbb{D}_{\frac{1}{2}}.$
\end{example}

\begin{theorem}\label{YYY5}Let $a, b, \nu$ and $\mu$ be positive real numbers and $z\in\mathbb{D}.$ If the following conditions hold
\begin{displaymath}
(H_4):\left\{ \begin{array}{ll}
(i)& \frac{\Gamma(a+\mu)}{\Gamma(a+2\mu)}<\frac{\Gamma(b+2\nu)}{4\Gamma(b+\nu)},
\\
(ii) & \frac{\Gamma(a)\Gamma(a+2\mu)}{\Gamma^2(a+\mu)}<\frac{4\Gamma^2(b+2\nu)}{3\Gamma(b+\nu)\Gamma(b+3\nu)}
,\\
(iii)& \frac{\Gamma(a)\Gamma(b)}{\Gamma(a+\mu)\Gamma(b+\nu)}-\frac{3(1-e)\Gamma(a)\Gamma(b)}{\Gamma(a+2\mu)\Gamma(b+2\nu)}<\frac{1}{4},
\end{array} \right.
\end{displaymath}
then the function $\mathbb{W}_{(\mu,a),(\nu,b)}(z)\in$ UCV.
\end{theorem}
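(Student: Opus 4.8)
The plan is to apply Lemma~\ref{lm:ucv-Sp}(i), which reduces the claim $\mathbb{W}_{(\mu,a),(\nu,b)}(z)\in UCV$ to the single pointwise estimate
$$\left|\frac{z\,\mathbb{W}''_{(\mu,a),(\nu,b)}(z)}{\mathbb{W}'_{(\mu,a),(\nu,b)}(z)}\right|<\frac12,\qquad z\in\mathbb{D}.$$
From the normalization \eqref{eq:normalized-Wright-four-parameter} (so that $\alpha_0=1$ and $\alpha_k>0$), termwise differentiation gives $z\,\mathbb{W}''_{(\mu,a),(\nu,b)}(z)=\sum_{k\ge1}k(k+1)\alpha_k z^{k}$ and $\mathbb{W}'_{(\mu,a),(\nu,b)}(z)=1+\sum_{k\ge1}(k+1)\alpha_k z^{k}$. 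I would then bound the modulus of the quotient from above by the quotient of a majorant of the numerator and a minorant of the denominator, using the triangle inequality together with $|z|<1$.

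For the numerator, the estimate $|z\,\mathbb{W}''_{(\mu,a),(\nu,b)}(z)|\le\sum_{k\ge1}k(k+1)\alpha_k$ holds, and after the shift $j=k-1$ together with $(j+1)(j+2)=\Gamma(j+3)/j!$ this majorant becomes a Fox--Wright series,
$$\sum_{k=1}^{\infty}k(k+1)\alpha_k=\Gamma(a)\Gamma(b)\,{}_1\Psi_2\Big[\begin{array}{c}(3,1)\\(a+\mu,\mu),(b+\nu,\nu)\end{array}\Big|\,1\Big],$$
whose first Fox--Wright coefficients are $\psi_0=2/(\Gamma(a+\mu)\Gamma(b+\nu))$, $\psi_1=6/(\Gamma(a+2\mu)\Gamma(b+2\nu))$ and $\psi_2=24/(\Gamma(a+3\mu)\Gamma(b+3\nu))$. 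The restrictions $(i)$ and $(ii)$ of $(H_4)$ encode (after this index shift) the hypotheses $\psi_1>\psi_2$ and $\psi_1^2<\psi_0\psi_2$ that license the upper half of the two--sided bound~\eqref{fi}. Evaluating it at $z=1$ yields $|z\,\mathbb{W}''_{(\mu,a),(\nu,b)}(z)|\le\Gamma(a)\Gamma(b)\big(\psi_0-(1-e)\psi_1\big)$, which equals $\tfrac{2\Gamma(a)\Gamma(b)}{\Gamma(a+\mu)\Gamma(b+\nu)}-\tfrac{6(1-e)\Gamma(a)\Gamma(b)}{\Gamma(a+2\mu)\Gamma(b+2\nu)}$, i.e. exactly twice the left--hand side of $(H_4){:}\,(iii)$; hypothesis $(iii)$ therefore forces $|z\,\mathbb{W}''_{(\mu,a),(\nu,b)}(z)|<\tfrac12$.

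The delicate point, where I expect the real work to lie, is the denominator. The crude minorant $|\mathbb{W}'_{(\mu,a),(\nu,b)}(z)|\ge 1-\sum_{k\ge1}(k+1)\alpha_k$ need not exceed $1$, so the numerator bound $<\tfrac12$ does not by itself deliver the factor $\tfrac12$ for the quotient; one must simultaneously keep $|\mathbb{W}'_{(\mu,a),(\nu,b)}(z)|$ bounded away from $0$. My plan is to estimate the tail $\sum_{k\ge1}(k+1)\alpha_k$ by the geometric-type majorization used in the proof of Theorem~\ref{KT1} (via Lemma~\ref{lm:two-ineq}), obtaining an explicit positive lower bound for $|\mathbb{W}'_{(\mu,a),(\nu,b)}(z)|$, and then to combine the two estimates so that the quotient stays below $\tfrac12$; the cleanest sufficient form of this is the single coefficient inequality $\sum_{k\ge1}(k+1)(2k+1)\alpha_k<1$. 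The main obstacle is that the theorem assumes only $a,b,\mu,\nu>0$, whereas the geometric estimate of Lemma~\ref{lm:two-ineq} requires $a,b\ge1$; controlling the denominator uniformly under the weaker hypothesis, and balancing it against the numerator so that \eqref{fi} and $(iii)$ suffice, is the step that demands care.
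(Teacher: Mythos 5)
Your numerator half is the paper's argument verbatim: the same majorant $\Gamma(a)\Gamma(b)\,{}_1\Psi_2\big[{}^{\;\;\;(3,1)}_{(a+\mu,\mu),\,(b+\nu,\nu)}\big|1\big]$, the same $\psi_0,\psi_1,\psi_2$, the upper half of \eqref{fi} at $z=1$, and the observation that the resulting bound is exactly twice the left-hand side of $(H_4)\colon(iii)$, forcing $|z\mathbb{W}''_{(\mu,a),(\nu,b)}(z)|<\tfrac12$; even your parenthetical ``after this index shift'' mirrors the paper, whose proof silently replaces $(H_4)\colon(i),(ii)$ by the shifted conditions it labels $(H_4')$. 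The genuine gap is exactly where you located it, and your proposed repair does not work as stated: Lemma \ref{lm:two-ineq}, together with the companion inequality $\Gamma(a+k\mu)\geq\Gamma(a+k)$ that feeds it, requires $a,b,\mu,\nu\geq1$, while the theorem assumes only positivity, and your fallback coefficient condition $\sum_{k\geq1}(k+1)(2k+1)\alpha_k<1$ is not a consequence of $(H_4)$. The paper instead closes this step by running the Fox--Wright machine a second time: as in \eqref{mo789}, $|\mathbb{W}'_{(\mu,a),(\nu,b)}(z)|\geq 1-\Gamma(a)\Gamma(b)\,{}_2\Psi_3\big[{}^{\;\;(1,1),\,(3,1)}_{(2,1),\,(a+\mu,\mu),\,(b+\nu,\nu)}\big|1\big]$, and this ${}_2\Psi_3$ is bounded above via \eqref{fi} with $\psi_0=2/(\Gamma(a+\mu)\Gamma(b+\nu))$ and $\psi_1=3/(\Gamma(a+2\mu)\Gamma(b+2\nu))$ --- under further conditions, labelled $(H_4'')$ in the proof, which are the hypotheses $(H_2)\colon(i),(ii)$ of Theorem \ref{KT2} and are \emph{not} contained in $(H_4)$. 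This route needs no $a,b\geq1$, which is how the theorem can be stated for all positive parameters; the price is extra Fox--Wright hypotheses that the theorem statement omits.

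Your factor-of-two worry is also well founded, and the published proof does not in fact dispose of it. Writing $A=\Gamma(a)\Gamma(b)/(\Gamma(a+\mu)\Gamma(b+\nu))$ and $B=3(e-1)\Gamma(a)\Gamma(b)/(\Gamma(a+2\mu)\Gamma(b+2\nu))$, so that $(H_4)\colon(iii)$ reads $A+B<\tfrac14$, the paper obtains the numerator bound $2(A+B)<\tfrac12$ and (after correcting an evident sign slip in \eqref{5555}, whose right-hand side should be $1-2A-B$) the denominator bound $1-2A-B>\tfrac12$; combining yields only $\big|z\mathbb{W}''_{(\mu,a),(\nu,b)}(z)/\mathbb{W}'_{(\mu,a),(\nu,b)}(z)\big|<1$, which is precisely what the paper's final display asserts, whereas Lemma \ref{lm:ucv-Sp}(i) demands the threshold $\tfrac12$. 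To get $\tfrac12$ from these two estimates one would need $6A+5B<1$, which $A+B<\tfrac14$ does not imply (it would follow if the right-hand side of $(iii)$ were tightened to $\tfrac18$). So: keep your numerator half as is, replace the Lemma \ref{lm:two-ineq} plan for the denominator by the second application of \eqref{fi} described above, and note that even then the argument, as in the paper, lands at $<1$ rather than the $<\tfrac12$ the lemma requires unless $(iii)$ is strengthened.
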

\begin{proof} Let $z\in\mathbb{D}.$ It is easy to see that
\begin{equation*}
\begin{split}
\left|z\mathbb{W}_{(\mu,a),(\nu,b)}^{\prime\prime}(z)\right|&=\Gamma(a)\Gamma(b)\left|{}_1\Psi_2\left[^{\;\;\;(3,1)}_{(a+\mu, \mu),\; (b+\nu, \nu)}\big|z\right]\right|\\
&\leq \Gamma(a)\Gamma(b)\;{}_1\Psi_2\left[^{\;\;\;(3,1)}_{(a+\mu, \mu),\; (b+\nu, \nu)}\big|1\right].
\end{split}
\end{equation*}
Taking into consideration (\ref{fi}) and under the following assumptions
$$(H_{4}^{'}):\frac{\Gamma(a+\mu)}{\Gamma(a+2\mu)}< \frac{\Gamma(b+2\nu)}{4\Gamma(b+\nu)}\;\;{\rm and}\;\;\frac{\Gamma(a+\mu)\Gamma(a+3\mu)}{\Gamma^2(a+2\mu)}<\frac{4\Gamma^2(b+2\nu)}{3\Gamma(b+\nu)\Gamma(b+3\nu)},$$
we get
\begin{equation}\label{1818}
\left|z\mathbb{W}_{(\mu,a),(\nu,b)}^{\prime\prime}(z)\right|\leq \frac{2\Gamma(a)\Gamma(b)}{\Gamma(a+\mu)\Gamma(b+\nu)}-\frac{6(1-e)\Gamma(a)\Gamma(b)}{\Gamma(a+2\mu)\Gamma(b+2\nu)}.
\end{equation}
However, for all $z\in\mathbb{D}$, we have
\begin{equation}\label{mo789}
\begin{split}
\left|\mathbb{W}_{(\mu,a),(\nu,b)}^{\prime}(z)\right|&=\left|1+\sum_{k=1}^\infty\frac{(k+1)\Gamma(a)\Gamma(b) z^k}{\Gamma(a+k\mu)\Gamma(b+k\nu)}\right|\\
&\geq 1-\sum_{k=1}^\infty\frac{(k+1)\Gamma(a)\Gamma(b) z^k}{\Gamma(a+k\mu)\Gamma(b+k\nu)}\\
&=1-\sum_{k=0}^\infty\frac{(k+2)\Gamma(a)\Gamma(b) }{\Gamma(a+\mu+k\mu)\Gamma(b+\nu+k\nu)}\\
&=1-\Gamma(a)\Gamma(b){}_2\Psi_3\left[^{\;\;(1, 1), (3,1)}_{(2, 1), (a+\mu, \mu),\; (b+\nu, \nu)}\big|1\right].
\end{split}
\end{equation}
Using \eqref{fi}, under the following assumptions
$$(H_{4}^{''}): \frac{\Gamma(a+2\mu)}{\Gamma(a+3\mu)}<\frac{3\Gamma(b+3\nu)}{8\Gamma(b+2\nu)}\;\;{\rm and}\;\;\frac{\Gamma(a+\mu)\Gamma(a+3\mu)}{\Gamma^2(a+2\mu)}<\frac{16\Gamma^2(b+2\nu)}{9\Gamma(b+\nu)\Gamma(b+3\nu)},$$
we obtain
\begin{equation*}
{}_2\Psi_3\left[^{\;\;(1, 1), (3,1)}_{(2, 1), (a+\mu, \mu),\; (b+\nu, \nu)}\big|1\right]\leq\frac{2}{\Gamma(a+\mu)\Gamma(b+\nu)}+\frac{3(e-1)}{\Gamma(a+2\mu)\Gamma(b+2\nu)}.
\end{equation*}
In view of the above inequality and (\ref{mo789}), we have
\begin{equation}\label{5555}
\left|\mathbb{W}_{(\mu,a),(\nu,b)}^{\prime}(z)\right|\geq1-\frac{2\Gamma(a)\Gamma(b)}{\Gamma(a+\mu)\Gamma(b+\nu)}+\frac{3(e-1)\Gamma(a)\Gamma(b)}{\Gamma(a+2\mu)\Gamma(b+2\nu)}.
\end{equation}
We notice that the right hand side of the above inequality is positive under the assumption $(H_4): (iii).$ Now, combining (\ref{1818}), (\ref{5555}) and $``(H_4): (iii)"$, we get
$$\left|\frac{\mathbb{W}_{(\mu,a),(\nu,b)}^{\prime\prime}(z)}{\mathbb{W}_{(\mu,a),(\nu,b)}^{\prime}(z)}\right|<1\;\;{\rm for\;\; all}\;\;z\in\mathbb{D}.$$
Hence, the first assertions of Lemma \ref{lm:ucv-Sp} completes the proof of Theorem \ref{YYY5}.

\end{proof}

\begin{corollary}Let $a>3.$ If $b>\frac{2a+6}{a-3}$, then the function $\mathbb{W}_{(1,a),(1,b)}(z)\in UCV.$
\end{corollary}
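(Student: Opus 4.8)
The plan is to specialize Theorem \ref{YYY5} to the case $\mu=\nu=1$ and to show that, under the stated hypotheses $a>3$ and $b>\frac{2a+6}{a-3}$, all three conditions of $(H_4)$ are satisfied, so that $\mathbb{W}_{(1,a),(1,b)}(z)\in UCV$ follows immediately. The first step is to rewrite each Gamma-quotient appearing in $(H_4)$ by means of the recursion $\Gamma(x+1)=x\Gamma(x)$, which turns the ratios $\Gamma(a+j)/\Gamma(a+i)$ and the analogous $b$-quotients into rational functions of $a$ and $b$. Carrying this out, I expect $(H_4):(i)$ to collapse to $\frac{1}{a+1}<\frac{b+1}{4}$, the condition $(H_4):(ii)$ to $\frac{a+1}{a}<\frac{4(b+1)}{3(b+2)}$, and $(H_4):(iii)$ to $\frac{1}{ab}+\frac{3(e-1)}{a(a+1)b(b+1)}<\frac14$.

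Next I would dispatch the three reduced inequalities in turn. For $(i)$, since $a>3$ forces $\frac{1}{a+1}<\frac14<\frac{b+1}{4}$ for every $b>0$, this condition is automatic. For $(ii)$, clearing denominators (all positive) in $\frac{a+1}{a}<\frac{4(b+1)}{3(b+2)}$ yields $b(a-3)>2a+6$; because $2a+6>0$, this admits a positive solution only when $a>3$, in which case it is exactly equivalent to the stated bound $b>\frac{2a+6}{a-3}$. This is what both forces the restriction $a>3$ and identifies $(ii)$ as the binding constraint.

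The remaining, and only genuinely nonformal, step is to verify that $(iii)$ holds automatically on the region $\{a>3,\ b>\frac{2a+6}{a-3}\}$. Multiplying out, $(iii)$ is equivalent to $(a+1)(b+1)(ab-4)>12(e-1)$. I would obtain a uniform lower bound for $ab$: from $b>\frac{2a+6}{a-3}$ one gets $ab>\frac{a(2a+6)}{a-3}$, and minimizing the right-hand side over $a>3$ (the critical point is $a=3+3\sqrt2$) gives $ab>18+12\sqrt2\approx 34.97$. Hence $ab-4>30$ and $(a+1)(b+1)>ab>34$, so the left side of $(iii)$ exceeds $30\cdot34>12(e-1)\approx 20.6$, and $(iii)$ is satisfied throughout with room to spare. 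With all of $(H_4)$ verified, Theorem \ref{YYY5} yields the conclusion. The main obstacle is precisely this last comparison — deciding which of $(ii)$ and $(iii)$ is dominant and pinning down the minimal value of $ab$ along the boundary curve — but once the lower bound $ab>18+12\sqrt2$ is established, $(iii)$ follows effortlessly.
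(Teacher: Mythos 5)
Your proposal is correct, and it follows the same skeleton as the paper's own proof: specialize Theorem \ref{YYY5} to $\mu=\nu=1$, use $\Gamma(x+1)=x\Gamma(x)$ to reduce the three conditions of $(H_4)$, and observe that $(ii)$ is the binding one. Your reductions agree exactly with the paper's: $(i)$ becomes $(a+1)(b+1)>4$ (automatic for $a>3$, $b>0$), $(ii)$ becomes $b(a-3)>2a+6$, and $(iii)$ becomes $(a+1)(b+1)(ab-4)>12(e-1)$. Where you genuinely diverge is the final dominance step. The paper solves $(iii)$ as a quadratic in $b$, displays the explicit root $b>\frac{-(a^2-3a-4)+\sqrt{(a^2-3a-4)^2+4(4a+12e-8)}}{2a(a+1)}$ (note the radicand there appears to have dropped a factor of $a(a+1)$, presumably a typo), and then simply \emph{asserts} that the maximum of this bound and $\frac{2a+6}{a-3}$ is the latter, with no verification. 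You instead prove the domination directly: from $b>\frac{2a+6}{a-3}$ you get $ab>\frac{a(2a+6)}{a-3}$, minimize over $a>3$ at the critical point $a=3+3\sqrt{2}$ to obtain the uniform bound $ab>18+12\sqrt{2}\approx 34.97$, and conclude $(a+1)(b+1)(ab-4)>30\cdot 34>12(e-1)\approx 20.6$. Your calculus is right (the minimum of $\frac{2a^2+6a}{a-3}$ on $(3,\infty)$ is indeed $18+12\sqrt{2}$), so your version actually certifies the comparison the paper only claims, at the modest cost of a short optimization; it is arguably the more complete of the two arguments.
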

\begin{proof}
Let $\mu=\nu=1$ in the above Theorem. Then the condition $(H_4): (i)"$ is valid for all $ab+a+b>3.$ The second condition of $(H_4)$ is equivalent to $b(a-3)>6+2a.$ Finally, the condition $``(H_4): (iii)"$ holds true for all
$$b>\frac{-(a^2-3a-4)+\sqrt{(a^2-3a-4)^2+4(4a+12e-8)}}{2a(a+1)}.$$
Consequently,
\begin{equation*}
\begin{split}
b&>\max_{a>3}\left(\frac{2a+6}{a-3}, \frac{-(a^2-3a-4)+\sqrt{(a^2-3a-4)^2+4(4a+12e-8)}}{2a(a+1)}\right)\\&=\frac{2a+6}{a-3}.
\end{split}
\end{equation*}
\end{proof}
\section{Further results on starlikeness, convexity and close-to-convexity}\label{sec-old-method}
In this section, we provide some alternative conditions for starlikeness, convexity and uniform convexity of $\mathbb{W}_{(\mu,a),(\nu,b)}(z),$ which will be useful to discuss close-to-convexity (univalency) of $\mathbb{W}_{(\mu,a),(\nu,b)}(z)$ in $\mathbb{D}.$

\begin{theorem}\label{thm:starlike-D}
Let $a,b,\mu,\nu\geq1$  be such that $b\geq\phi(a)=\dfrac{3a+2}{a(a+1)}.$ Then
\begin{enumerate}[\rm(i)]
\item  $\mathbb{W}_{(\mu,a),(\nu,b)}(z)$ is starlike in $\mathbb{D}$.
\item $\mathbb{W}_{(\mu,a),(\nu,b)}(z)$ is close-to-convex with respect to the starlike function $\mathbb{W}_{(\mu,a),(1,b)}(z)$ in  $\mathbb{D}$.
\end{enumerate}
\end{theorem}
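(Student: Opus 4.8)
The plan is to prove the two assertions using a coefficient-based sufficient condition together with the subordinating-factor machinery of Lemma \ref{lm:equivalent-cond}, rather than the Fox--Wright inequality \eqref{fi} employed in Section \ref{sec-new-method}. For part (i), the cleanest route is to establish the classical sufficient condition for starlikeness of a function with nonnegative coefficients, namely
\[
\sum_{k=2}^{\infty} k\,\alpha_{k-1}\le 1,
\]
where the coefficients $\alpha_k=\Gamma(a)\Gamma(b)/\bigl(\Gamma(a+k\mu)\Gamma(b+k\nu)\bigr)$ are those of $\mathbb{W}_{(\mu,a),(\nu,b)}(z)$ in \eqref{eq:normalized-Wright-four-parameter}. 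First I would bound each ratio of Gamma functions using Lemma \ref{lm:two-ineq}: since $a,b,\mu,\nu\ge 1$, the factor $\Gamma(a)/\Gamma(a+k\mu)$ is dominated by $1/\bigl(a(a+1)\cdots(a+k-1)\bigr)$ (the $\mu=1$ case is the largest), and similarly for the $b$-factor. The key point is that inequality \eqref{Eq35} converts the troublesome factor $k$ arising from differentiation into the clean estimate $k/\bigl(a(a+1)\cdots(a+k-1)\bigr)\le 1/\bigl(a(a+1)^{k-2}\bigr)$, while \eqref{Eq36} handles the $b$-product.

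Concretely, I would estimate
\[
\sum_{k\ge 1}(k+1)\,|\alpha_k|
\le \sum_{k\ge 1}\frac{(k+1)}{a(a+1)\cdots(a+k-1)\,b(b+1)\cdots(b+k-1)},
\]
split $(k+1)=k+1$ and apply \eqref{Eq35}--\eqref{Eq36} termwise, and then sum the two resulting geometric series in the variable $1/\bigl((a+1)(b+1)\bigr)$. This collapses the sum into a closed rational expression in $a,b$, and the hypothesis $b\ge\phi(a)=(3a+2)/\bigl(a(a+1)\bigr)$ is precisely what forces that expression to be at most $1$. I expect the algebra to reduce to an inequality equivalent to $a(a+1)b\ge 3a+2$, matching the definition of $\phi$. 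Once the coefficient inequality holds, starlikeness in all of $\mathbb{D}$ follows from the standard criterion (equivalently, one checks $\Re\bigl(z\mathbb{W}'/\mathbb{W}\bigr)>0$ via $|z\mathbb{W}'-\mathbb{W}|<|\mathbb{W}|$ on $\mathbb{D}$).

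For part (ii), I would exploit the structure that setting $\nu$ to $1$ in the second argument produces the comparison function $g(z)=\mathbb{W}_{(\mu,a),(1,b)}(z)$, whose coefficients $\gamma_k=\Gamma(a)\Gamma(b)/\bigl(\Gamma(a+k\mu)\Gamma(b+k)\bigr)$ dominate those of $\mathbb{W}_{(\mu,a),(\nu,b)}$ pointwise because $\nu\ge 1$ makes $\Gamma(b+k\nu)\ge\Gamma(b+k)$. Since part (i) (applied with the second pair replaced by $(1,b)$) already shows $g\in\mathcal{S}^*$, it remains to verify $\Re\bigl(z\mathbb{W}'_{(\mu,a),(\nu,b)}(z)/g(z)\bigr)>0$ on $\mathbb{D}$; I would do this by showing the quotient $z\mathbb{W}'/g$ has real part exceeding $1/2$ via Lemma \ref{lm:convex-decreasing-R} or, more directly, by the bound $|z\mathbb{W}'-g|<|g|$ using the same Gamma-ratio estimates as above together with the coefficientwise domination. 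The main obstacle I anticipate is the bookkeeping in part (ii): one must keep the $\nu\ge 1$ monotonicity of $\Gamma(b+k\nu)$ aligned with the differentiated coefficients so that the close-to-convexity quotient stays controlled, and verify that the threshold $\phi(a)$ from part (i) is still sufficient here rather than requiring a strictly larger lower bound on $b$.
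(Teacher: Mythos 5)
Your proposal is correct and takes essentially the same route as the paper: your coefficient test $\sum_{k\ge2}k\,\alpha_{k-1}\le1$ is verified with exactly the ingredients the paper uses for its bound $|z\mathbb{W}'/\mathbb{W}-1|<1$ (the monotonicity $\Gamma(a+k)\le\Gamma(a+k\mu)$, Lemma \ref{lm:two-ineq}, and the geometric series in $1/\bigl((a+1)(b+1)\bigr)$), and it collapses to the identical final inequality $2(a+1)(b+1)+a\le ab(ab+a+b)$, which factors as $(b+1)\bigl(a(a+1)b-(3a+2)\bigr)\ge0$, i.e.\ precisely $b\ge\phi(a)$ as you predicted. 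Your plan for part (ii) --- deducing $\mathbb{W}_{(\mu,a),(1,b)}\in\mathcal{S}^*$ from part (i) with $\nu=1$, then proving $\bigl|z\mathbb{W}'_{(\mu,a),(\nu,b)}-g\bigr|<|g|$ via the domination $\Gamma(b+k\nu)\ge\Gamma(b+k)$ and the same estimates --- is exactly the paper's argument, with the same threshold $\phi(a)$ sufficing.
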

\begin{proof}
\begin{enumerate}[(i)]
\item
Let $p(z)=\dfrac{z\mathbb{W}'_{(\mu,a),(\nu,b)}(z)}{\mathbb{W}_{(\mu,a),(\nu,b)}(z)}$, $z\in\mathbb{D}$. Then $p(z)$ is analytic in $\mathbb{D}$ and
$p(0)=1$. To prove that $\Re(p(z))>0$, it is enough to show that $|p(z)-1|<1$. It is well-known that
$$\Gamma(a+k)\leq \Gamma(a+k\mu),\quad k\in\mathbb{N},\;a,\mu>1.$$
Therefore,
\begin{align}\label{eq:ratio-Gamma-a}
\dfrac{\Gamma(a)}{\Gamma(a+k\mu)}\leq \dfrac{\Gamma(a)}{\Gamma(a+k)}
=\dfrac{1}{a(a+1)\cdots(a+k-1)}.
\end{align}
Similarly, we have
\begin{align}\label{eq:ratio-Gamma-b}
\dfrac{\Gamma(b)}{\Gamma(b+k\nu)}\leq \dfrac{\Gamma(b)}{\Gamma(b+k)}
=\dfrac{1}{b(b+1)\cdots(b+k-1)}.
\end{align}

With the help of Lemma \ref{lm:two-ineq} and the above inequalities \eqref{eq:ratio-Gamma-a} and \eqref{eq:ratio-Gamma-b}, we obtain
\begin{equation*}
\begin{split}
\left|\mathbb{W}^\prime_{(\mu,a),(\nu,b)}(z)-\dfrac{\mathbb{W}_{(\mu,a),(\nu,b)}(z)}{z}\right|
&=\left|\sum_{k=1}^{\infty}\dfrac{k\Gamma(a)\Gamma(b)z^k}{\Gamma(a+k\mu)\Gamma(b+k\nu)} \right|\\
&\leq \sum_{k=1}^{\infty}\dfrac{k}{ab(a+1)(b+1)\cdots(a+k-1)(b+k-1)}\\
&<\dfrac{1}{ab}+\dfrac{1}{ab}\sum_{k=2}^{\infty}\dfrac{1}{(a+1)^{k-2}(b+1)^{k-1}}\\
&=\dfrac{1}{ab}+\dfrac{1}{ab(b+1)}\sum_{k=0}^{\infty}\left\{\dfrac{1}{(a+1)(b+1)}\right\}^{k}\\
&=\dfrac{(a+1)(b+1)+a}{ab\{(a+1)(b+1)-1\}}.
\end{split}
\end{equation*}
Again, we have
\begin{equation*}
\begin{split}
\left| \dfrac{\mathbb{W}_{(\mu,a),(\nu,b)}(z)}{z}\right|
&\geq 1-\left| \sum_{k=1}^{\infty}\dfrac{\Gamma(a)\Gamma(b)z^k}{\Gamma(a+k\mu)(b+k\nu)} \right|\\
&\geq1-\sum_{k=1}^{\infty}\dfrac{1}{ab(a+1)(b+1)\cdots(a+k-1)(b+k-1)}\\
&>1-\dfrac{1}{ab}\sum_{k=0}^{\infty}\left\{\dfrac{1}{(a+1)(b+1)}\right\}^k
=1-\dfrac{(a+1)(b+1)}{ab(ab+a+b)}\\
&=\dfrac{ab(a+b+ab)-(a+1)(b+1)}{ab(a+b+ab)}.
\end{split}
\end{equation*}
Therefore,
\begin{align*}
\left|p(z)-1 \right|&=\left|\dfrac{z\mathbb{W}'_{(\mu,a),(\nu,b)}(z)}{\mathbb{W}_{(\mu,a),(\nu,b)}(z)}-1 \right|
=\left|\dfrac{\mathbb{W}'_{(\mu,a),(\nu,b)}(z)-\dfrac{\mathbb{W}_{(\mu,a),(\nu,b)}(z)}{z}}{\dfrac{\mathbb{W}_{(\mu,a),(\nu,b)}(z)}{z}} \right|\\
&<\dfrac{(a+1)(b+1)+a}{ab(a+b+ab)-(a+1)(b+1)}.
\end{align*}
Under the given condition, $\frac{(a+1)(b+1)+a}{ab(a+b+ab)-(a+1)(b+1)}\leq1$. This shows that
$\mathbb{W}_{(\mu,a),(\nu,b)}(z)$ is starlike in $\mathbb{D}$ and consequently the part (i) of this theorem is proved.

\item
Now, we proceed to prove the part (ii). For this,
we have to show that there exists a function $h\in\mathcal{S}^{*}$ such that
$$
\Re\left(\dfrac{z\mathbb{W}'_{(\mu,a),(\nu,b)}(z)}{h(z)} \right)>0,\quad z\in \mathbb{D},
$$
which can be proved by showing that
$$
\left|\dfrac{z\mathbb{W}'_{(\mu,a),(\nu,b)}(z)}{h(z)}-1\right|<1,\quad z\in\mathbb{D}.
$$
Using part (i) of this Theorem \ref{thm:starlike-D}, we can observe that $\mathbb{W}_{(\mu,a),(1,b)}(z)$ is starlike in $\mathbb{D}$
under the given hypothesis. Again using (\ref{Eq35}) and (\ref{Eq36}), we obtain
\begin{align*}
\left|\mathbb{W}'_{(\mu,a),(\nu,b)}(z)-\dfrac{\mathbb{W}_{(\mu,a),(1,b)}(z)}{z}\right|
&<\sum_{k=1}^{\infty}\left|\dfrac{(k+1)\Gamma(a)\Gamma(b)}{\Gamma(a+k\mu)\Gamma(b+k\nu)}-\dfrac{\Gamma(a)\Gamma(b)}{\Gamma(a+k)\Gamma(b+k)} \right|\\
&\leq \sum_{k=1}^{\infty}\left|\dfrac{k\Gamma(a)\Gamma(b)}{\Gamma(a+k)\Gamma(b+k)}\right|\\
&\leq \sum_{k=1}^{\infty}\dfrac{k}{ab(a+1)(b+1)\cdots(a+k-1)(b+k-1)}\\
&<\dfrac{1}{ab}+\dfrac{1}{ab}\sum_{k=2}^{\infty}\dfrac{1}{(a+1)^{k-2}(b+1)^{k-1}}\\
&=\dfrac{(a+1)(b+1)+a}{ab\{(a+1)(b+1)-1\}}
\end{align*}
and
\begin{align*}
\left|\dfrac{\mathbb{W}_{(\mu,a),(1,b)}(z)}{z}\right|&\geq 1-\left|\dfrac{\Gamma(a)\Gamma(b)z^k}{\Gamma(a+k)\Gamma(b+k)}\right|\\
&\geq1-\sum_{k=1}^{\infty}\dfrac{1}{ab(a+1)(b+1)\cdots(a+k-1)(b+k-1)}\\
&>1-\dfrac{1}{ab}\sum_{k=0}^{\infty}\left\{\dfrac{1}{(a+1)(b+1)}\right\}^k
=1-\dfrac{(a+1)(b+1)}{ab(ab+a+b)}\\
&=\dfrac{ab(a+b+ab)-(a+1)(b+1)}{ab(a+b+ab)}.
\end{align*}
Using the above inequalities and given conditions, we have
\begin{align*}
\left|\dfrac{z\mathbb{W}'_{(\mu,a),(\nu,b)}(z)}{\mathbb{W}_{(\mu,a),(1,b)}(z)}-1 \right|
&=\left|\dfrac{\mathbb{W}'_{(\mu,a),(\nu,b)}(z)-\dfrac{\mathbb{W}_{(\mu,a),(1,b)}(z)}{z}}{\dfrac{\mathbb{W}_{(\mu,a),(1,b)}(z)}{z}} \right|\\
&<\dfrac{(a+1)(b+1)+a}{ab(a+b+ab)-(a+1)(b+1)}\leq 1,
\end{align*}
which implies that $\Re\left(\frac{z\mathbb{W}'_{(\mu,a),(\mu,b)}(z)}{\mathbb{W}_{(\mu,a),(1,b)}(z)} \right)>0$.
Consequently, $\mathbb{W}_{(\mu,a),(\mu,b)}(z)$ is close-to-convex with respect to the starlike function $\mathbb{W}_{(\mu,a),(1,b)}(z)$
in $\mathbb{D},$ under the given hypothesis.
\end{enumerate}
\end{proof}

\begin{theorem}
Let $a,b,\mu,\nu\geq1$ and $0\leq \eta<1$ be such that $b\geq\psi(a,\eta)$, where
\begin{align*}
\psi(a,\eta)&=\dfrac{(a+1)+(1-\eta)(a+1-a^2)}{2a(1-\eta)(a+1)}\\
&+\dfrac{\sqrt{\{(a+1)+(1-\eta)(a+1-a^2)\}^2-4a(1-\eta)(a+1)\{(1-\eta)(a+1)+2a+1\}}}{2a(1-\eta)(a+1)}.
\end{align*}
Then $\mathbb{W}_{(\mu,a),(\nu,b)}(z)\in\mathcal{S}^{*}(\eta)$ for each $z\in\mathbb{D}.$
\end{theorem}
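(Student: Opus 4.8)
The plan is to follow part (i) of Theorem~\ref{thm:starlike-D} almost verbatim, sharpening the conclusion from $\Re(p(z))>0$ to $\Re(p(z))>\eta$, where $p(z)=z\mathbb{W}'_{(\mu,a),(\nu,b)}(z)/\mathbb{W}_{(\mu,a),(\nu,b)}(z)$. Since $p$ is analytic in $\mathbb{D}$ with $p(0)=1$, it is enough to prove the estimate $|p(z)-1|<1-\eta$ for every $z\in\mathbb{D}$: this gives $\Re(p(z))\geq 1-|p(z)-1|>\eta$, which is the defining inequality of $\mathcal{S}^{*}(\eta)$.

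To obtain such an estimate I would recycle the two bounds already established in the proof of Theorem~\ref{thm:starlike-D}. Using Lemma~\ref{lm:two-ineq} and the monotonicity inequalities for the Gamma quotients, one has
\[
\left|\mathbb{W}'_{(\mu,a),(\nu,b)}(z)-\frac{\mathbb{W}_{(\mu,a),(\nu,b)}(z)}{z}\right|<\frac{(a+1)(b+1)+a}{ab(a+b+ab)}
\]
together with
\[
\left|\frac{\mathbb{W}_{(\mu,a),(\nu,b)}(z)}{z}\right|>\frac{ab(a+b+ab)-(a+1)(b+1)}{ab(a+b+ab)}=:\frac{D}{ab(a+b+ab)}.
\]
Provided $D>0$ (which I verify below is guaranteed by the hypothesis on $b$), dividing the first bound by the second cancels the common factor $ab(a+b+ab)$ and yields
\[
|p(z)-1|<\frac{(a+1)(b+1)+a}{ab(a+b+ab)-(a+1)(b+1)}=\frac{N}{D},\qquad N:=(a+1)(b+1)+a.
\]

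It then remains to show that $b\geq\psi(a,\eta)$ is exactly the condition forcing $N/D\leq 1-\eta$. Expanding $N=(a+1)b+(2a+1)$ and $D=a(a+1)b^{2}+(a^{2}-a-1)b-(a+1)$ and clearing denominators, the inequality $N\leq(1-\eta)D$ becomes the quadratic inequality in $b$
\[
(1-\eta)a(a+1)\,b^{2}+\big[(1-\eta)(a^{2}-a-1)-(a+1)\big]\,b-\big[(1-\eta)(a+1)+(2a+1)\big]\geq 0.
\]
Here the leading coefficient is positive and the constant term is negative, so the quadratic has a single positive root; the inequality holds precisely for $b$ at least that root. Solving by the quadratic formula identifies this root with $\psi(a,\eta)$, and along the way $N/D\leq 1-\eta<1$ forces $D>N/(1-\eta)>0$, which retroactively justifies the division above. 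Hence $b\geq\psi(a,\eta)$ gives $|p(z)-1|<1-\eta$ and $\mathbb{W}_{(\mu,a),(\nu,b)}(z)\in\mathcal{S}^{*}(\eta)$.

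The only delicate part is the last algebraic step: confirming that the positive root of the displayed quadratic coincides with the stated $\psi(a,\eta)$ — i.e.\ matching $-B/(2A)$ with the leading fraction and the discriminant $B^{2}-4AC$ with the expression under the radical — and checking that this discriminant is nonnegative so that $\psi(a,\eta)$ is real. A useful sanity check is the limit $\eta\to 0$: it should collapse $\psi(a,0)$ to $\phi(a)=(3a+2)/\{a(a+1)\}$, thereby recovering part (i) of Theorem~\ref{thm:starlike-D}; tracking this limit is the cleanest way to detect a sign slip in the discriminant.
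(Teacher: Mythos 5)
Your proposal is essentially the paper's own proof, only written out in full: the paper's argument consists of the single remark that the estimates in the proof of Theorem \ref{thm:starlike-D} give $|p(z)-1|<\frac{(a+1)(b+1)+a}{ab(a+b+ab)-(a+1)(b+1)}=N/D$, and that the hypothesis on $b$ makes this bound at most $1-\eta$. Your reduction is correct and actually more careful than the paper's: the expansions $N=(a+1)b+(2a+1)$ and $D=a(a+1)b^{2}+(a^{2}-a-1)b-(a+1)$ are right, the quadratic $(1-\eta)a(a+1)b^{2}+\bigl[(1-\eta)(a^{2}-a-1)-(a+1)\bigr]b-\bigl[(1-\eta)(a+1)+(2a+1)\bigr]\geq 0$ is the correct clearing of denominators, and your retroactive justification of $D>0$ from $N\leq(1-\eta)D$ closes a gap the paper leaves implicit.

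However, the one step you deferred is precisely the step that fails for the statement as printed. With $A=(1-\eta)a(a+1)$, $B=(1-\eta)(a^{2}-a-1)-(a+1)$ and $C=-\{(1-\eta)(a+1)+2a+1\}$ one has $C<0$, so
\begin{equation*}
B^{2}-4AC=\{(a+1)+(1-\eta)(a+1-a^{2})\}^{2}+4a(1-\eta)(a+1)\{(1-\eta)(a+1)+2a+1\},
\end{equation*}
with a \emph{plus} sign, whereas the stated $\psi(a,\eta)$ carries a minus sign under the radical. Your own proposed sanity check exposes this immediately: at $a=1$, $\eta=0$ the printed radicand is $3^{2}-4\cdot 2\cdot 5=-31<0$, so the printed $\psi$ is not even real, while the corrected radicand is $9+40=49$, giving $\psi(1,0)=(3+7)/4=5/2=\phi(1)$, in agreement with Theorem \ref{thm:starlike-D}(i); in fact $b=\phi(a)=(3a+2)/\{a(a+1)\}$ is exactly the positive root of the $\eta=0$ quadratic for every $a\geq 1$. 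So your proof is sound once the discriminant is written with the correct sign, but the claim that the quadratic-formula root ``coincides with the stated $\psi(a,\eta)$'' is false as written --- the theorem's displayed formula contains a sign error --- and you should have carried out the limit check you yourself flagged rather than leaving the identification pending.
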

\begin{proof}
From the proof of Theorem \ref{thm:starlike-D}, it can be observed that $\mathbb{W}_{(\mu,a),(\nu,b)}(z)$ is starlike function
of order $\eta$, if $\frac{(a+1)(b+1)+a}{ab(a+b+ab)-(a+1)(b+1)}\leq1-\eta$, which is a consequence of the given condition.
\end{proof}

Using Lemma \ref{lm:ucv-Sp} and the similar technique as in Theorem \ref{thm:starlike-D}, the following theorem can be obtained.

\begin{theorem}\label{thm:S-p}
Let $a,b,\mu,\nu\geq1$  be such that $b\geq\tau(a)$, where
$$
\tau(a)=\dfrac{3(a+1)-a^2+\sqrt{a^4+14a^3+35a^2+30a+9}}{2a(a+1)}.
$$
Then  $\mathbb{W}_{(\mu,a),(\nu,b)}(z)\in\mathcal{S}_p$ for each $z\in\mathbb{D}.$
\end{theorem}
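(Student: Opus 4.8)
The plan is to follow the hint preceding the statement: reuse, essentially verbatim, the ratio estimate already established in the proof of Theorem~\ref{thm:starlike-D}(i), and then feed it into the $\mathcal{S}_p$-criterion of Lemma~\ref{lm:ucv-Sp}(ii). First I would set $p(z)=\dfrac{z\mathbb{W}'_{(\mu,a),(\nu,b)}(z)}{\mathbb{W}_{(\mu,a),(\nu,b)}(z)}$, which is analytic in $\mathbb{D}$ with $p(0)=1$. Exactly the same chain of coefficient estimates used in Theorem~\ref{thm:starlike-D}(i) — the Gamma-ratio bounds \eqref{eq:ratio-Gamma-a} and \eqref{eq:ratio-Gamma-b} together with Lemma~\ref{lm:two-ineq} for the numerator, and the geometric-series lower bound for $\left|\mathbb{W}_{(\mu,a),(\nu,b)}(z)/z\right|$ — yields, for all $z\in\mathbb{D}$,
$$
\left|p(z)-1\right|<\frac{(a+1)(b+1)+a}{ab(a+b+ab)-(a+1)(b+1)}.
$$
By Lemma~\ref{lm:ucv-Sp}(ii), it then suffices to show that this right-hand side is at most $\tfrac12$, since that inequality forces $\left|\tfrac{zf'(z)}{f(z)}-1\right|<\tfrac12$ and hence $f\in\mathcal{S}_p$.

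The heart of the argument is therefore the purely algebraic step of translating the scalar inequality
$$
\frac{(a+1)(b+1)+a}{ab(a+b+ab)-(a+1)(b+1)}\leq\frac12
$$
into the stated condition on $b$. Clearing the denominator (which is positive, as will be noted below) and simplifying, I expect this to reduce to the quadratic inequality
$$
a(a+1)\,b^2+(a^2-3a-3)\,b-(5a+3)\geq 0.
$$
Since the leading coefficient $a(a+1)$ is positive, the admissible region is $b\geq$ (larger root). Computing that root gives numerator $-(a^2-3a-3)=3(a+1)-a^2$ and discriminant $(a^2-3a-3)^2+4a(a+1)(5a+3)$, which I would verify expands to $a^4+14a^3+35a^2+30a+9$; the larger root is then exactly $\tau(a)$. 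Thus $b\geq\tau(a)$ is precisely equivalent to the bound $\leq\tfrac12$, and the conclusion $\mathbb{W}_{(\mu,a),(\nu,b)}(z)\in\mathcal{S}_p$ follows from Lemma~\ref{lm:ucv-Sp}(ii).

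The main obstacle is not conceptual but computational: one must confirm that the discriminant collapses to the clean quartic $a^4+14a^3+35a^2+30a+9$ appearing under the radical in $\tau(a)$, which requires careful expansion of $(a^2-3a-3)^2$ and $4a(a+1)(5a+3)$. A secondary point to record explicitly is that the denominator $ab(a+b+ab)-(a+1)(b+1)$ is strictly positive: this is automatic from the reduced quadratic, since $b\geq\tau(a)$ gives $ab(a+b+ab)-(a+1)(b+1)\geq 2\bigl[(a+1)(b+1)+a\bigr]>0$, so the fraction is well defined and the application of Lemma~\ref{lm:ucv-Sp}(ii) is legitimate. Everything else mirrors Theorem~\ref{thm:starlike-D} line by line, so the write-up can be kept brief.
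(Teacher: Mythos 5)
Your proposal is correct and is exactly the paper's intended argument: the paper proves this theorem in one line by citing Lemma~\ref{lm:ucv-Sp}(ii) together with ``the similar technique as in Theorem~\ref{thm:starlike-D}'', i.e.\ the same bound $|p(z)-1|<\frac{(a+1)(b+1)+a}{ab(a+b+ab)-(a+1)(b+1)}\leq\frac12$, which reduces to the quadratic $a(a+1)b^2+(a^2-3a-3)b-(5a+3)\geq0$ whose larger root is $\tau(a)$, and your expansion of the discriminant to $a^4+14a^3+35a^2+30a+9$ checks out. Your explicit remark that $b\geq\tau(a)$ forces the denominator to be positive is a worthwhile detail the paper leaves implicit.
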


\begin{theorem}\label{TH4}
Let $a,b,\mu,\nu\geq1$. If $b\geq\phi_1(a)$, where
$$
\phi_1(a)=\dfrac{(a+1-a^2)+\sqrt{a^4+2a^3+7a^2+6a+1}}{2a(a+1)},
$$
then  $\mathbb{W}_{(\mu,a),(\nu,b)}(z)$ is starlike in $\mathbb{D}_{1/2}$.
\end{theorem}
\begin{proof}
With the help of Lemma \ref{lm:two-ineq} and the inequalities \eqref{eq:ratio-Gamma-a} and \eqref{eq:ratio-Gamma-b}, we obtain
\begin{align*}
\left|\dfrac{\mathbb{W}_{(\mu,a),(\nu,b)}(z)}{z} -1\right|
&<\sum_{k=1}^{\infty}\dfrac{1}{ab(a+1)(b+1)\cdots(a+k-1)(b+k-1)}\\
&\leq\dfrac{1}{ab}\sum_{k=0}^{\infty}\left\{\dfrac{1}{(a+1)(b+1)}\right\}^k
=\dfrac{(a+1)(b+1)}{ab(ab+a+b)}.
\end{align*}
Using the given condition and Lemma \ref{lm:univalent-D1/2}, we conclude that $\mathbb{W}_{(\mu,a),(\nu,b)}(z)$ is starlike in $\mathbb{D}_{1/2}$, under the given hypothesis.
\end{proof}
\begin{theorem}\label{thm:convex-D1/2}
Let $a,b,\mu,\nu\geq1$.
\begin{enumerate}[\normalfont(i)]
\item If $b\geq\psi_1(a)$, where
$$
\psi_1(a)=\dfrac{(3-a)+\sqrt{a^2+2a+9}}{2a}.
$$
Then  $\mathbb{W}_{(\mu,a),(\nu,b)}(z)$ is convex in $\mathbb{D}_{1/2}$.
\item If $b\geq\psi_2(a)$, where
$$
\psi_2(a)=\dfrac{\sqrt{5}(2a+3)-2a^2+\sqrt{4a^4+8\sqrt{5}a^3+20(1+\sqrt{5})a^2+4(15+4\sqrt{5})a+45}}{4a(a+1)}.
$$
Then  $\mathbb{W}_{(\mu,a),(\nu,b)}(z)$ is starlike in $\mathbb{D}$.
\end{enumerate}

\end{theorem}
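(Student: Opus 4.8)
The plan is to estimate $|\mathbb{W}'_{(\mu,a),(\nu,b)}(z)-1|$ on $\mathbb{D}$ and then invoke two coefficient-type criteria already recorded in the excerpt: Lemma \ref{lm:convex-D1/2} (which yields convexity in $\mathbb{D}_{1/2}$ as soon as $|f'(z)-1|<1$) for part (i), and Lemma \ref{lm:starlike-derivative-mocannu} (which yields starlikeness in $\mathbb{D}$ as soon as $|f'(z)-1|<2/\sqrt{5}$) for part (ii). Since $\mathbb{W}'_{(\mu,a),(\nu,b)}(z)=1+\sum_{k=1}^{\infty}(k+1)\alpha_k z^k$ with $\alpha_k=\Gamma(a)\Gamma(b)/[\Gamma(a+k\mu)\Gamma(b+k\nu)]>0$, for $z\in\mathbb{D}$ I would pass to $|\mathbb{W}'_{(\mu,a),(\nu,b)}(z)-1|\leq\sum_{k=1}^{\infty}(k+1)\alpha_k$ and thereby reduce both parts to estimating one convergent numerical series.

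First I would replace the Gamma ratios by the elementary bounds \eqref{eq:ratio-Gamma-a}--\eqref{eq:ratio-Gamma-b}, so that $(k+1)\alpha_k\leq (k+1)/P_k$ where $P_k=ab(a+1)(b+1)\cdots(a+k-1)(b+k-1)$. Writing $k+1=k+1$ and summing the two pieces separately, I would dominate each by a geometric series using Lemma \ref{lm:two-ineq}: in the term $k/P_k$ I would charge the factor $k$ against the $b$-product via \eqref{Eq35} and bound the $a$-product via \eqref{Eq36}, while in the term $1/P_k$ I would bound both products via \eqref{Eq36}. Each resulting series has ratio $1/[(a+1)(b+1)]$, and summing should give $\sum_{k\geq1}k/P_k\leq\frac{(a+1)(b+1)+b}{ab(a+b+ab)}$ together with $\sum_{k\geq1}1/P_k<\frac{(a+1)(b+1)}{ab(a+b+ab)}$, hence the master estimate
$$\left|\mathbb{W}'_{(\mu,a),(\nu,b)}(z)-1\right|<\frac{2(a+1)(b+1)+b}{ab(a+b+ab)},\qquad z\in\mathbb{D}.$$

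With this estimate in hand, part (i) follows by demanding that the right-hand side be $\leq 1$; clearing denominators turns this into $a^2b^2+a^2b+ab^2-2ab-2a-3b-2\geq0$, which I expect to factor as $(a+1)\big(ab^2+(a-3)b-2\big)\geq0$. Since $a+1>0$, this is equivalent to $ab^2+(a-3)b-2\geq0$, a quadratic in $b$ with exactly one positive root, namely $\psi_1(a)$; thus $b\geq\psi_1(a)$ forces $|\mathbb{W}'_{(\mu,a),(\nu,b)}(z)-1|<1$ and Lemma \ref{lm:convex-D1/2} finishes the argument. For part (ii) I would instead demand the right-hand side be $\leq 2/\sqrt{5}$; clearing denominators gives $2a(a+1)b^2+(2a^2-2\sqrt{5}\,a-3\sqrt{5})b-2\sqrt{5}(a+1)\geq0$, whose positive root is precisely $\psi_2(a)$, and Lemma \ref{lm:starlike-derivative-mocannu} then delivers starlikeness in $\mathbb{D}$.

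The geometric summation is routine; the step that genuinely needs care is the bookkeeping in Lemma \ref{lm:two-ineq}, specifically charging the factor $k$ against the $b$-product (through \eqref{Eq35}) rather than the $a$-product, since it is exactly this choice that produces the ``$+b$'' in the numerator and hence the stated thresholds $\psi_1(a),\psi_2(a)$ instead of the rational threshold $\phi(a)$ of Theorem \ref{thm:starlike-D}. Beyond that, the only real obstacle is algebraic: verifying that the two polynomial inequalities factor through $(a+1)$ so that they collapse to the displayed quadratics in $b$, whose positive roots one then identifies with $\psi_1(a)$ and $\psi_2(a)$ via the quadratic formula and a discriminant computation.
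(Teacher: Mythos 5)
Correct, and essentially identical to the paper's proof: you bound $|\mathbb{W}'_{(\mu,a),(\nu,b)}(z)-1|$ by splitting $(k+1)/P_k$ into $k/P_k+1/P_k$, apply Lemma \ref{lm:two-ineq} with the factor $k$ charged to the $b$-product (exactly the paper's choice, which yields its bound $\tfrac{2a(b+1)+3b+2}{ab\{(a+1)(b+1)-1\}}$, algebraically equal to your $\tfrac{2(a+1)(b+1)+b}{ab(ab+a+b)}$), and finish with Lemma \ref{lm:convex-D1/2} for part (i) and Lemma \ref{lm:starlike-derivative-mocannu} for part (ii). Your factorizations through $(a+1)$ and the quadratic-root identifications of $\psi_1(a)$ and $\psi_2(a)$ check out and merely make explicit the algebra the paper leaves implicit.
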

\begin{proof} Using inequalities \eqref{eq:ratio-Gamma-a} and \ref{eq:ratio-Gamma-b} and Lemma \ref{lm:two-ineq}, we have
\begin{align*}
\left|\mathbb{W}'_{(\mu,a),(\nu,b)}(z)-1\right|&<\sum_{k=1}^{\infty}\dfrac{(k+1)\Gamma(a)\Gamma(b)}{\Gamma(a+k\mu)\Gamma(b+k\nu)}\\
&\leq \sum_{k=1}^{\infty}\dfrac{k+1}{ab(a+1)(b+1)\cdots(a+k-1)(b+k-1)}\\
&=\dfrac{1}{ab}+\sum_{k=2}^{\infty}\dfrac{k}{ab(a+1)(b+1)\cdots(a+k-1)(b+k-1)}\\
&\quad+\sum_{k=1}^{\infty}\dfrac{1}{ab(a+1)(b+1)\cdots(a+k-1)(b+k-1)}\\
&\leq \dfrac{1}{ab}+\sum_{k=2}^{\infty}\dfrac{1}{ab(a+1)^{k-1}(b+1)^{k-2}}+\sum_{k=1}^{\infty}\dfrac{1}{ab(a+1)^{k-1}(b+1)^{k-1}}\\
&=\dfrac{1}{ab}+\dfrac{(a+2)}{ab(a+1)}\sum_{k=0}^{\infty}\left\{\dfrac{1}{(a+1)(b+1)}\right\}^k\\
&=\dfrac{2a(b+1)+3b+2}{ab\{(a+1)(b+1)-1\}}.
\end{align*}
Using Lemma \ref{lm:convex-D1/2} and given condition (i), we can conclude that $\mathbb{W}_{(\mu,a),(\nu,b)}(z)$ is convex in $\mathbb{D}_{1/2}$.
Further using the given condition (ii) and with the help of Lemma \ref{lm:starlike-derivative-mocannu}, it is easy to show that
$\mathbb{W}_{(\mu,a),(\nu,b)}(z)$ is starlike in $\mathbb{D}$. Hence the theorem is proved.
\end{proof}

If we set $a=\mu=\nu=1$ in the normalized four parameters Wright function \eqref{eq:normalized-Wright-four-parameter},
then we obtain the normalized form of a class of functions involving the confluent hypergeometric function as follows:
\begin{align*}
\mathbb{F}(b,z)=z\sum_{k=0}^\infty \dfrac{\Gamma(b)}{\Gamma(b+k)}\dfrac{z^{k}}{k!}=z\times{}_0F_1(-;b;z),
\end{align*}
where ${}_0F_1(-;b;z)$ is the confluent hypergeometric function \cite{abramowitz64,Andrews-Askey-Roy-Book-specialFunctions-1999}.
Following corollary provides a set of sufficient conditions for $\mathbb{F}(b,z)$ to be  starlike and convex in the open unit disk $\mathbb{D}$
and in $\mathbb{D}_{1/2}$.

\begin{corollary}The following assertions hold true:
\begin{enumerate}[\normalfont(i)]
\item If $b\geq\frac{5}{4}$, then $\mathbb{F}(b,z)$ is starlike in the open unit disk $\mathbb{D}$.\\
\item If $b\geq \frac{(5+\sqrt{89})}{4}$, then $\mathbb{F}(b,z)\in \mathcal{S}_p$.
\item If $b\geq\frac{(1+\sqrt{17})}{4}$, then  $\mathbb{F}(b,z)$ is starlike in $\mathbb{D}_{1/2}$.\\
\item If $b\geq1+\sqrt{3}$, then  $\mathbb{F}(b,z)$ is convex on $\mathbb{D}_{1/2}$.
\end{enumerate}
\end{corollary}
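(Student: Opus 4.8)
The plan is to obtain all four assertions as the single specialization $a=\mu=\nu=1$ of the general theorems of this section, using the identity $\mathbb{F}(b,z)=\mathbb{W}_{(1,1),(1,b)}(z)$. First I would record the preliminary simplification: at $a=\mu=1$ one has $\Gamma(a+k\mu)=\Gamma(1+k)=k!$ exactly, so the estimates \eqref{eq:ratio-Gamma-a}, \eqref{eq:ratio-Gamma-b} and the bounds built from Lemma \ref{lm:two-ineq} all specialize cleanly, and the coefficient of $z^{k+1}$ in $\mathbb{F}(b,z)$ is exactly $\frac{1}{k!\,(b)_k}$, where $(b)_k=b(b+1)\cdots(b+k-1)$. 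With this in hand, each threshold function appearing in the source theorems should collapse at $a=1$ to the stated closed form, and the whole proof reduces to these evaluations.

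For the three parts that come directly from a named threshold I would simply substitute $a=1$ and simplify the radical. Part (iii) is Theorem \ref{TH4}(i): here $\phi_1(1)=\frac{(1+1-1)+\sqrt{1+2+7+6+1}}{2\cdot 1\cdot 2}=\frac{1+\sqrt{17}}{4}$, giving starlikeness in $\mathbb{D}_{1/2}$. Part (ii) is Theorem \ref{thm:S-p}: $\tau(1)=\frac{3\cdot 2-1+\sqrt{1+14+35+30+9}}{2\cdot 2}=\frac{5+\sqrt{89}}{4}$, giving membership in $\mathcal{S}_p$. Part (iv) is Theorem \ref{thm:convex-D1/2}(i): $\psi_1(1)=\frac{(3-1)+\sqrt{1+2+9}}{2}=\frac{2+2\sqrt{3}}{2}=1+\sqrt{3}$, giving convexity in $\mathbb{D}_{1/2}$. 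In each case the only genuine work is checking that the expression under the square root collapses to $17$, $89$, and $12$ respectively; these are short arithmetic verifications.

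The remaining assertion, part (i) (starlikeness in the full disk $\mathbb{D}$), is where I expect the real difficulty. The natural source is Theorem \ref{thm:starlike-D}(i), whose hypothesis $b\ge\phi(a)=\frac{3a+2}{a(a+1)}$ gives $\phi(1)=\frac{5}{2}$; alternatively one may invoke the criterion $|f(z)/z-1|<2/\sqrt{5}$ of Lemma \ref{lm:starlike-simple-cond} together with the coefficient estimate $\bigl|\mathbb{F}(b,z)/z-1\bigr|\le\sum_{k\ge 1}\frac{1}{k!\,(b)_k}$. The hard part will be that the stated constant $\tfrac54$ is sharper than the geometric majorant $\sum_{k\ge 1}\frac{1}{ab\,[(a+1)(b+1)]^{k-1}}$ used throughout the section can deliver: at $a=1$ that majorant only yields the weaker threshold $\phi_2(1)\approx 1.41$, and the ratio bound of Theorem \ref{thm:starlike-D} gives only $\tfrac52$. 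To reach $\tfrac54$ I would therefore replace the geometric bound by a direct estimate of the tail $\sum_{k\ge 1}\frac{1}{k!\,(b)_k}$, keeping the first terms exact and majorizing the remainder by a series summable in closed form, and then solve the resulting inequality in $b$. Confirming that this sharpened argument really produces the clean value $\tfrac54$ — rather than one of the larger constants coming from the crude estimates — is the delicate step, and the one I would scrutinize most carefully before committing to the stated threshold.
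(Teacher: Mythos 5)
Your treatment of parts (ii)--(iv) coincides with the paper's own proof, which consists of nothing more than citing Theorem \ref{thm:S-p}, Theorem \ref{TH4}(i) and Theorem \ref{thm:convex-D1/2}(i) at $a=\mu=\nu=1$; your evaluations $\tau(1)=\frac{5+\sqrt{89}}{4}$, $\phi_1(1)=\frac{1+\sqrt{17}}{4}$ and $\psi_1(1)=1+\sqrt{3}$ are correct, so those three parts are settled exactly as in the paper.

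For part (i) your suspicion is exactly right, and it exposes a genuine defect in the statement itself. The paper's proof of (i) is the bare citation of Theorem \ref{thm:starlike-D}, whose hypothesis at $a=1$ is $b\geq\phi(1)=\frac{5}{2}$, not $\frac{5}{4}$ (the proof of that theorem reduces at $a=1$ to $(2b-5)(b+1)\geq0$). The symmetry $\mathbb{W}_{(1,1),(1,b)}=\mathbb{W}_{(1,b),(1,1)}$ only yields the worse threshold $1+\sqrt{3}$, and the paper's Conclusion applies the very same theorem to $\mathbb{W}_{b,\nu}=\mathbb{W}_{(1,1),(\nu,b)}$ with the constant $5/2$. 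So the value $\frac54$ is proved nowhere in the paper; it is almost certainly a slip (e.g.\ computing the denominator of $\phi(1)$ as $(a+1)^2=4$ instead of $a(a+1)=2$), and the corollary should read $b\geq\frac52$.

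Where your proposal itself fails is the rescue plan for reaching $\frac54$. Since the coefficients of $\mathbb{F}(b,z)/z-1=\sum_{k\geq1}z^k/(k!\,(b)_k)$ are positive, one has $\sup_{z\in\mathbb{D}}\left|\mathbb{F}(b,z)/z-1\right|=\sum_{k\geq1}\frac{1}{k!\,(b)_k}={}_0F_1(-;b;1)-1$, approached as $z\to1^-$; at $b=\frac54$ the first two terms already give $0.8+0.178>2/\sqrt{5}\approx0.894$ (the full sum is about $0.997$). Hence no refinement of the tail estimate, however exact, can make Lemma \ref{lm:starlike-simple-cond} apply at $b=\frac54$ --- the hypothesis is genuinely false there, and even the sharpest version of this criterion (the exact sum rather than the geometric majorant, which is what Theorem \ref{TH4}(ii) uses to get $\phi_2(1)\approx1.41$) certifies only $b\gtrsim1.37$. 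The same positivity remark rules out Lemma \ref{lm:starlike-derivative-mocannu}, whose relevant supremum $\sum_{k\geq1}(k+1)/(k!\,(b)_k)$ is larger still, and the $|zf'/f-1|<1$ computation is exactly what produces $5/2$. The correct move is therefore not to chase $\frac54$ but to correct the constant to $\frac52$, or, if the smaller constant is true at all, to prove it by methods outside this paper's toolkit, e.g.\ via the identification $\mathbb{F}(b,z)=\mathbb{J}_{b-1}(-z)$ and known radius-of-starlikeness results for Bessel functions.
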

\begin{proof}
Part (i) can be proved using  Theorem \ref{thm:starlike-D}. Part (ii) is a consequence of Theorem \ref{thm:S-p}.
Using part (i) of Theorem \ref{TH4}, part (iii) of this corollary can be obtained. Finally using Theorem \ref{thm:convex-D1/2},
part (iv) can be proved.
\end{proof}

\begin{figure}[http!]
\centering
\begin{subfigure}[b]{0.4\textwidth}
                \includegraphics[width=1\columnwidth]{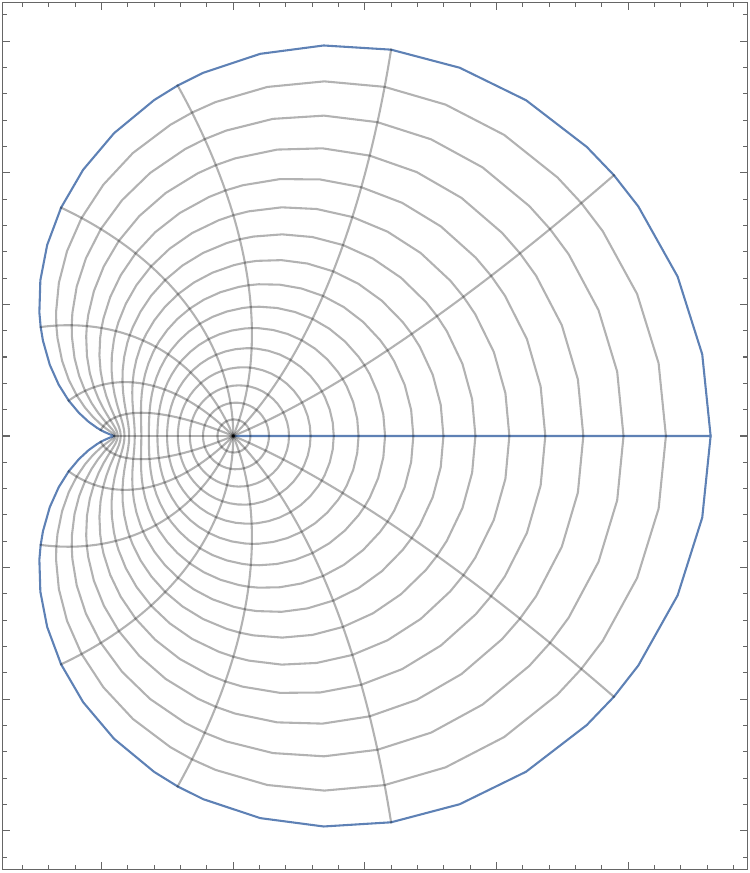}
                \caption{$\mathbb{F}(3/2,z)$ for $z\in\mathbb{D}$.}
        \end{subfigure}\hfill
\begin{subfigure}[b]{0.4\textwidth}
                \includegraphics[width=1\columnwidth]{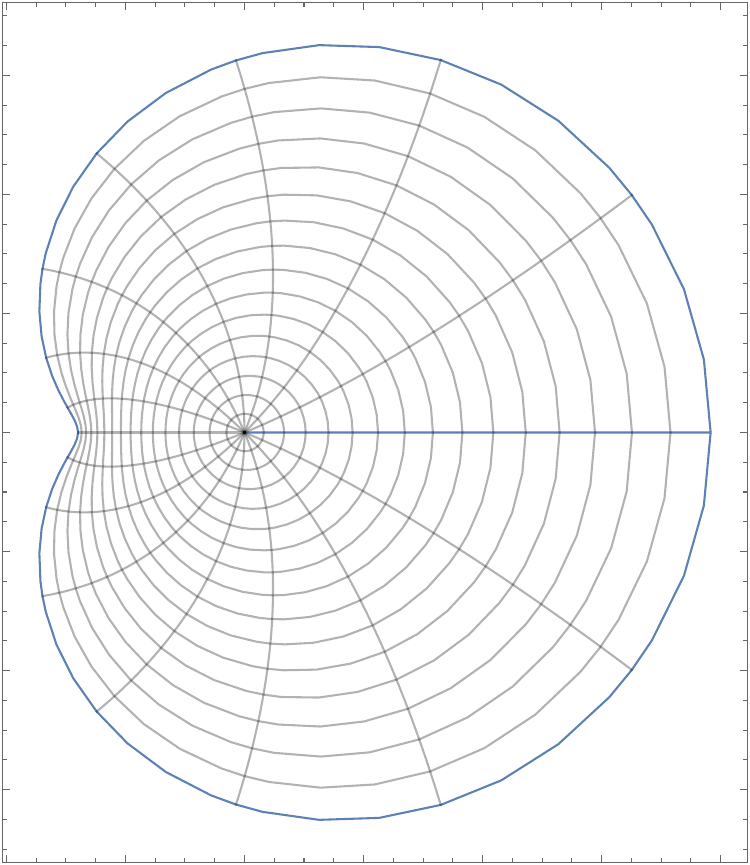}
                \caption{ $\mathbb{F}(1,z)$ for $z\in\mathbb{D}_{1/2}$.}
        \end{subfigure}\hfill \newline\newline
        \begin{subfigure}[b]{0.4\textwidth}
                \includegraphics[width=1\columnwidth]{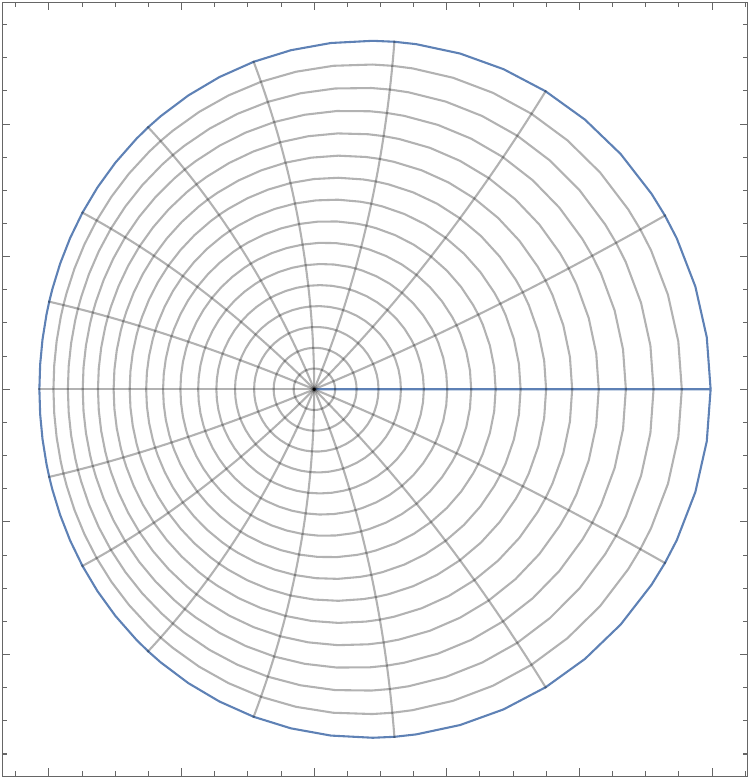}
                \caption{$\mathbb{F}(1+\sqrt{3},z)$ for $z\in\mathbb{D}_{1/2}$.}
        \end{subfigure}\hfill
         \begin{subfigure}[b]{0.4\textwidth}
                \includegraphics[width=1\columnwidth]{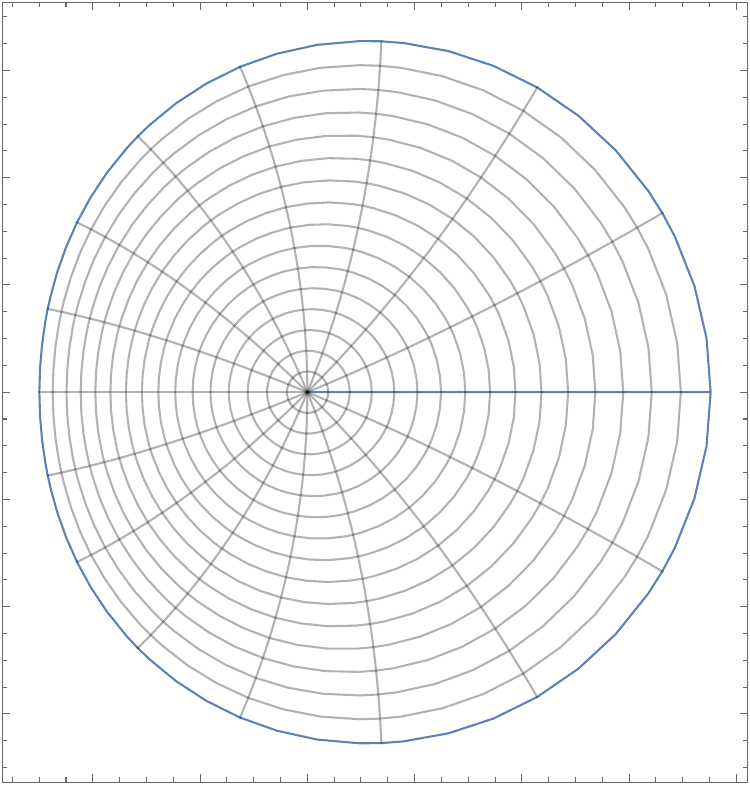}
                \caption{$\mathbb{W}_{(1,\sqrt{2}),(1,\sqrt{3})}(z)$ for $z\in\mathbb{D}_{1/2}$.}
        \end{subfigure}%
\caption{Mapping of $\mathbb{F}(b,z)$ and $\mathbb{W}_{(\mu,a),(\nu,b)}(z)$ over $\mathbb{D}$ and $\mathbb{D}_{1/2}$.}
\end{figure}

\section{Results related to $\mathcal{W}_{(\mu,a),(\nu,b)}(z)$}

\begin{theorem}\label{thm:ineq-Wright-hypergeom}
For any $a,b,\mu,\nu\geq1$ and $z\in\mathbb{D},$ the following inequality holds:
$$
\left|\mathbb{W}_{(\mu,a),(\nu,b)}(z) \right|\leq|z|\times {}_1F_2(1;a,b;|z|),
$$
where $\displaystyle{}_1F_2(c;a,b;|z|)=\sum_{k=0}^{\infty}\dfrac{(c)_k}{(a)_k(b)_k}\dfrac{|z|^{k}}{k!}$ is hypergeometric function with
$(\alpha)_k$ as pochhammer symbol defined as $\displaystyle(\alpha)_0=1,(\alpha)_k=\alpha(\alpha+1)\cdots(\alpha+k-1)$.
\end{theorem}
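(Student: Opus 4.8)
The plan is to bound the series for $\mathbb{W}_{(\mu,a),(\nu,b)}(z)$ term by term using the triangle inequality and then to recognize the majorizing series as the stated hypergeometric expression. This is a direct estimate that reuses the Gamma-ratio inequalities \eqref{eq:ratio-Gamma-a} and \eqref{eq:ratio-Gamma-b} already established in the proof of Theorem~\ref{thm:starlike-D}.

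First I would start from the series representation \eqref{eq:normalized-Wright-four-parameter} and apply the triangle inequality to obtain
\begin{equation*}
\left|\mathbb{W}_{(\mu,a),(\nu,b)}(z)\right|
\leq\sum_{k=0}^{\infty}\dfrac{\Gamma(a)\Gamma(b)\,|z|^{k+1}}{\Gamma(a+k\mu)\Gamma(b+k\nu)}.
\end{equation*}
Next, since $a,b,\mu,\nu\geq1$, the inequalities \eqref{eq:ratio-Gamma-a} and \eqref{eq:ratio-Gamma-b} give
$\Gamma(a)/\Gamma(a+k\mu)\leq 1/(a)_k$ and $\Gamma(b)/\Gamma(b+k\nu)\leq 1/(b)_k$ for every $k\in\mathbb{N}$, where $(a)_k=a(a+1)\cdots(a+k-1)=\Gamma(a+k)/\Gamma(a)$ is the Pochhammer symbol. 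Substituting these bounds yields
\begin{equation*}
\left|\mathbb{W}_{(\mu,a),(\nu,b)}(z)\right|
\leq\sum_{k=0}^{\infty}\dfrac{|z|^{k+1}}{(a)_k\,(b)_k}
=|z|\sum_{k=0}^{\infty}\dfrac{|z|^{k}}{(a)_k\,(b)_k}.
\end{equation*}

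Finally I would identify the resulting majorant with the claimed hypergeometric function. Observing that $(1)_k=1\cdot2\cdots k=k!$, so that $(1)_k/k!=1$, the definition in the statement reads
\begin{equation*}
{}_1F_2(1;a,b;|z|)=\sum_{k=0}^{\infty}\dfrac{(1)_k}{(a)_k(b)_k}\dfrac{|z|^{k}}{k!}
=\sum_{k=0}^{\infty}\dfrac{|z|^{k}}{(a)_k(b)_k},
\end{equation*}
which is exactly the series appearing in the previous bound. Combining the two displays gives $\left|\mathbb{W}_{(\mu,a),(\nu,b)}(z)\right|\leq|z|\times{}_1F_2(1;a,b;|z|)$, as required. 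There is no genuine obstacle here; the only point deserving care is the normalization identity $(1)_k/k!=1$, which converts the defining series of ${}_1F_2(1;a,b;|z|)$ into the coefficient-free form that matches the term-by-term estimate. Convergence of all series on $\mathbb{D}$ is automatic since $\mu+\nu>0$ guarantees absolute convergence of $\mathcal{W}_{(\mu,a),(\nu,b)}$, and the comparison with ${}_1F_2$ (which has $\epsilon=1+a+b-1>0$) is likewise absolutely convergent.
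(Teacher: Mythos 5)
Your proposal is correct and follows essentially the same route as the paper: term-by-term estimation via the triangle inequality, the Gamma-ratio bounds $\Gamma(a)/\Gamma(a+k\mu)\leq 1/(a)_k$ and $\Gamma(b)/\Gamma(b+k\nu)\leq 1/(b)_k$, and the identification $(1)_k/k!=1$ to recognize the majorant as $|z|\,{}_1F_2(1;a,b;|z|)$. If anything, your citation of \eqref{eq:ratio-Gamma-a} and \eqref{eq:ratio-Gamma-b} is the more accurate attribution, since the paper nominally invokes Lemma \ref{lm:two-ineq} while actually using exactly these ratio inequalities.
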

\begin{proof}By using \eqref{Eq35} and \eqref{Eq36}, we obtain
\begin{align*}
\left|\mathbb{W}_{(\mu,a),(\nu,b)}(z) \right|
&=\left|\sum_{k=0}^{\infty}\dfrac{\Gamma(a)\Gamma(b)z^{k+1}}{\Gamma(k\mu+a)\Gamma(k\nu+b)} \right|\\
&\leq|z|\sum_{k=0}^{\infty}\left|\dfrac{\Gamma(a)\Gamma(b)z^{k}}{\Gamma(k\mu+a)\Gamma(k\nu+b)} \right|\\
&\leq|z|\sum_{k=0}^{\infty}\dfrac{|z|^{k}}{ab(a+1)(b+1)\cdots (a+k-1)(b+k-1)} \\
&=|z|\sum_{k=0}^{\infty}\dfrac{(1)_k}{(a)_k(b)_k}\dfrac{|z|^{k}}{k!}=|z|\times {}_1F_2(1;a,b;|z|).
\end{align*}
\end{proof}
Using \eqref{1.2} and Theorem \ref{thm:ineq-Wright-hypergeom}, following corollary can be obtained.
\begin{corollary}
For any $a,b,\mu,\nu\geq1$ and $z\in\mathbb{D}$ the following inequality holds:
$$
\left|\mathcal{W}_{(\mu,a),(\nu,b)}(z) \right| \leq \dfrac{|z|}{\Gamma(a)\Gamma(b)}\times {}_1F_2(1;a,b;|z|),
$$
where $\displaystyle{}_1F_2(1;a,b;|z|)$ is a hypergeometric function.
\end{corollary}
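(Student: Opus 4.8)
The plan is to read the corollary off directly from Theorem \ref{thm:ineq-Wright-hypergeom}, transporting the estimate already proved for the normalized function across the normalization identity, so that no fresh series manipulation is required. Recall from \eqref{eq:normalized-Wright-four-parameter} that the two functions are linked by
$$\mathbb{W}_{(\mu,a),(\nu,b)}(z)=z\,\Gamma(a)\Gamma(b)\,\mathcal{W}_{(\mu,a),(\nu,b)}(z),$$
which expresses $\mathcal{W}_{(\mu,a),(\nu,b)}$ in terms of the normalized function whose modulus has already been controlled. The whole argument then consists in dividing the bound of Theorem \ref{thm:ineq-Wright-hypergeom} through by the scalar factors that this relation introduces, while keeping the hypotheses $a,b,\mu,\nu\ge1$ so that everything underlying the preceding theorem stays in force.

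Concretely, I would first take moduli in the normalization identity, obtaining $\Gamma(a)\Gamma(b)\,\bigl|\mathcal{W}_{(\mu,a),(\nu,b)}(z)\bigr|=\bigl|\mathbb{W}_{(\mu,a),(\nu,b)}(z)\bigr|/|z|$ for $z\neq 0$, and then substitute the majorant $\bigl|\mathbb{W}_{(\mu,a),(\nu,b)}(z)\bigr|\le |z|\times{}_1F_2(1;a,b;|z|)$ furnished by Theorem \ref{thm:ineq-Wright-hypergeom}. Rearranging to isolate $\bigl|\mathcal{W}_{(\mu,a),(\nu,b)}(z)\bigr|$ then yields a hypergeometric majorant of the shape displayed in the statement, with the generalized hypergeometric series ${}_1F_2(1;a,b;|z|)=\sum_{k\ge0}\tfrac{(1)_k}{(a)_k(b)_k}\tfrac{|z|^k}{k!}$ arising exactly as in the proof of Theorem \ref{thm:ineq-Wright-hypergeom}, namely from the coefficient estimate $\Gamma(a)\Gamma(b)/[\Gamma(a+k\mu)\Gamma(b+k\nu)]\le 1/[(a)_k(b)_k]$ supplied by \eqref{eq:ratio-Gamma-a}, \eqref{eq:ratio-Gamma-b} and Lemma \ref{lm:two-ineq}.

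The step demanding the most attention, and the one I would treat as the main obstacle, is the careful accounting of the power of $|z|$ multiplying the hypergeometric factor on the right-hand side. The estimate of Theorem \ref{thm:ineq-Wright-hypergeom} carries an explicit factor $|z|$ precisely because $\mathbb{W}_{(\mu,a),(\nu,b)}$ vanishes at the origin, whereas $\mathcal{W}_{(\mu,a),(\nu,b)}(0)=1/(\Gamma(a)\Gamma(b))\neq0$; consequently the exponent of $|z|$ surviving after one divides through by the factor $z\,\Gamma(a)\Gamma(b)$ from the normalization must be tracked with care in order to match the prefactor $|z|/(\Gamma(a)\Gamma(b))$ recorded in the statement. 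Beyond this exponent bookkeeping there is no analytic difficulty: the restriction $a,b,\mu,\nu\ge1$ is inherited verbatim from Theorem \ref{thm:ineq-Wright-hypergeom}, so the coefficient inequalities \eqref{eq:ratio-Gamma-a}, \eqref{eq:ratio-Gamma-b} and Lemma \ref{lm:two-ineq} remain available and the corollary follows as an immediate consequence.
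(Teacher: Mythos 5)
Your route is the same one the paper takes: the paper gives no detailed argument, only the remark that the corollary follows from the normalization \eqref{eq:normalized-Wright-four-parameter} together with Theorem \ref{thm:ineq-Wright-hypergeom}, which is exactly your plan of dividing the theorem's bound through by the normalizing factor. But the step you yourself single out as the crux is where the proposal breaks down, and no amount of exponent bookkeeping can repair it. Dividing $\bigl|\mathbb{W}_{(\mu,a),(\nu,b)}(z)\bigr|\leq |z|\,{}_1F_2(1;a,b;|z|)$ by $|z|\,\Gamma(a)\Gamma(b)$ cancels the factor $|z|$ entirely and yields
\[
\bigl|\mathcal{W}_{(\mu,a),(\nu,b)}(z)\bigr|\leq \frac{{}_1F_2(1;a,b;|z|)}{\Gamma(a)\Gamma(b)},
\]
with no power of $|z|$ surviving, so your assertion that rearranging ``yields a hypergeometric majorant of the shape displayed in the statement'' is false: the displayed statement carries an extra prefactor $|z|$ that your derivation does not produce.

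That extra factor is not recoverable by any correct argument, because the inequality as printed is actually false near the origin: $\mathcal{W}_{(\mu,a),(\nu,b)}(0)=1/(\Gamma(a)\Gamma(b))>0$, while the right-hand side tends to $0$ as $z\to 0$. Concretely, for $a=b=\mu=\nu=1$ and $z=1/2$ the left side is $\sum_{k\geq 0}(1/2)^k/(k!)^2\approx 1.57$, whereas the right side is $\tfrac{1}{2}e^{1/2}\approx 0.82$. So the $|z|$ in the corollary is an error in the paper's statement (a stray factor carried over from the normalized case), and the correct consequence of Theorem \ref{thm:ineq-Wright-hypergeom} obtainable by your --- and the paper's --- method is the bound displayed above without the prefactor $|z|$. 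Your instinct that the power of $|z|$ was the main obstacle was exactly right; the gap is that instead of ``tracking it with care'' you needed to notice that it cannot be matched, and either state the corrected bound or point out that the $k=0$ term of the series already refutes the printed one.
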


\section{Conclusion}
Our research discusses about some geometric properties (such as starlikeness, convexity, uniform convexity and close-to-convexity) of four parameters Wright function. In addition, geometric properties of the partial sums of this function are studied.
As applications,  we obtain certain geometric properties of normalized Bessel function of the first kind and two parameters Wright function
as given below.
\begin{enumerate}
\item It can be noted that for $a=\mu=\nu=1$, $b=\beta+1$ and $z=-z(z\in\mathbb{D})$, we have a normalization of the Bessel function of first kind
\cite{Baricz-Book-Bessel-Function1994} $\mathcal{J}_{\beta}(z)$ defined as \cite{Prajapat-Wright-Function-ITSF2015}:
$$
\mathbb{J}_{\beta}(z)=\mathbb{W}_{(1,\beta+1),(1,1)}(-z)=\Gamma(\beta+1)z^{1-\beta/2}\mathcal{J}_{\beta}(2\sqrt{z}).
$$
\begin{enumerate}[\normalfont(i)]
\item Using Theorem \ref{thm:starlike-D}, we claim that $\mathbb{J}_{\beta}(z)$ is starlike in $\mathbb{D}$ if $\beta\geq 3/2$, which is a sharper than the lower bound $(\sqrt{3})$ available in \cite{Prajapat-Wright-Function-ITSF2015}.

\item Using Theorem \ref{thm:convex-D1/2}, we obtain $\mathbb{J}_{\beta}(z)$
is convex in $\mathbb{D}_{1/2}$ if $\beta\geq \sqrt{3}$, which is the same condition available in \cite{Prajapat-Wright-Function-ITSF2015}.

\item Theorem \ref{thm:S-p} helps us to conclude that $\mathbb{J}_{\beta}(z)\in \mathcal{S}_p$ if $\beta\geq \frac{(1+\sqrt{89})}{4}$.
\end{enumerate}
\item For $a=\mu=1$, the four parameters Wright function reduces to two parameters Wright function
$$
W_{b,\nu}(z)=\mathcal{W}_{(1,1),(\nu,b)}(z)=\sum_{k=0}^{\infty}\dfrac{z^k}{k!\Gamma(b+k\nu)},
$$
and the corresponding normalized two parameters Wright function can be defined as
$$
\mathbb{W}_{b,\nu}(z)=\mathbb{W}_{(1,1),(\nu,b)}(z)=\Gamma(b)W_{b,\nu}(z)=\sum_{k=0}^{\infty}\dfrac{\Gamma(b)z^k}{k!\Gamma(b+k\nu)}.
$$
\begin{enumerate}[\normalfont(i)]
\item Using Theorem \ref{thm:starlike-D}, we claim that if $\nu\geq1$ and $b\geq 5/2$,
then $\mathbb{W}_{b,\nu}(z)$ is starlike in $\mathbb{D}$. This lower bound of $b$ is sharper than the lower bound $(b\geq 1+\sqrt{3})$ available
in \cite{Prajapat-Wright-Function-ITSF2015}.

\item From Theorem \ref{thm:convex-D1/2}, we have if $\nu\geq1$ and $b\geq(1+\sqrt{3})$, then $\mathbb{W}_{b,\nu}(z)$ is convex in
$\mathbb{D}_{1/2}$, which is the same condition available in \cite{Prajapat-Wright-Function-ITSF2015}.

\item From Theorem \ref{thm:starlike-D}, we obtain, if $\nu\geq1$ and $b\geq 5/2$, then $\mathbb{W}_{b,\nu}(z)$ is
close-to-convex with respect to $\mathbb{W}_{1,\nu}(z)$ in $\mathbb{D}$.

\item Using Theorem \ref{thm:S-p}, we conclude that $\mathbb{W}_{b,\nu}(z)\in \mathcal{S}_p$ if $b\geq \frac{(5+\sqrt{89})}{4}$.

\item Setting $a=\mu=1$ and $b=4$ in Theorem \ref{KT2}, we obtain that $\mathbb{W}_{4,\nu}(z)$ is convex in $\mathbb{D}_{\frac{1}{2}}$ if $\nu\in [0.76,0.95]$. Putting $a=\mu=1$ and $b=2$ in Theorem \ref{KT4}, we have $\mathbb{W}_{2,\nu}(z)$ is starlike in $\mathbb{D}_{\frac{1}{2}}$ if
    $\nu\in[0.645, 0.999]$. Similarly, we can verify that the other results obtained in Section \ref{sec-new-method} will be useful to discuss the geometric properties of $\mathbb{W}_{b,\nu}(z)$ when $0<\nu<1$. In literature, various results involving geometric properties of $\mathbb{W}_{b,\nu}(z)$ are available \cite{Prajapat-Wright-Function-ITSF2015} with the condition that $b,\nu\geq1$. But the results obtained in Section \ref{sec-new-method} discuss the case $0<\nu<1$. On the other hand, the results obtained in Section \ref{sec-old-method}, will be helpful to discuss the geometric properties of $\mathbb{W}_{b,\nu}(z)$ when $b,\nu\geq1$.
\end{enumerate}
\end{enumerate}

\end{document}